\theoremstyle{plain}
\newtheorem{theorem}{Theorem}[section]
\newtheorem{lemma}[theorem]{Lemma}
\newtheorem{corollary}[theorem]{Corollary}
\newtheorem{Problem}[theorem]{Problem}
\theoremstyle{definition}
\theoremstyle{remark}
\newtheorem{remark}[theorem]{Remark}
\DeclareMathAlphabet{\mathpzc}{OT1}{pzc}{m}{it}
\DeclareSymbolFont{cyrletters}{OT2}{wncyr}{m}{n}
\DeclareMathSymbol{\Sha}{\mathalpha}{cyrletters}{"58}
\begin{document}
\thispagestyle{empty}
	
\newcommand{\ZZ}{{\mathbb Z}}
\newcommand{\QQ}{{\mathbb Q}}
\newcommand{\Z}{{\mathbb Z}}
\newcommand{\Q}{\mathbb Q}
\newcommand{\FF}{{\mathbb F}}
\newcommand{\E}{{\mathcal{E}}}
\newcommand{\mcF}{{\mathcal{F}}}
\newcommand{\A}{\mathcal{A}}
\newcommand{\G}{\mathbb{G}}
\newcommand{\chara}{\rm{char}}
\newcommand{\ds}{\displaystyle}
\newcommand{\la}{\langle}
\newcommand{\ra}{\rangle}
\newcommand{\z}{{\zeta}}
\newcommand{\ov}{\overline}
\newcommand{\wt}{\widetilde}
\newcommand{\Or}{\mathcal{O}}
\newcommand{\X}{\mathcal{X}}

\newcommand{\Hil}{\mathcal{H}}
\newcommand{\End}{\rm{End}}
\newcommand{\Aut}{\rm{Aut}}

\newcommand{\loc}{{\rm loc}}
\newcommand{\Gal}{{\rm Gal}}
\newcommand{\Id}{{\rm Id}}
\newcommand{\GL}{{\rm GL}}
\newcommand{\SL}{{\rm SL}}
\newcommand{\Hh}{{\rm H}}
\newcommand{\Mat}{{\rm Mat}}
\newcommand{\disc}{{\rm disc}}
\newcommand{\imm}{{\rm Im}}
\newcommand{\ind}{{\rm Ind}}
\newcommand{\res}{{\rm res}}
\newcommand{\diag}{{\rm diag}}
\newcommand{\NN}{{\rm NN}}
	
\renewcommand{\leq}{\leqslant}
\renewcommand{\geq}{\geqslant}
\newcommand{\modn}{{\rm{mod} \hspace{0.1cm} }}
\newcommand{\nc}{\normalcolor}
\newcommand{\teal}{\color{teal}}
\newcommand{\ja}[1]{{\color{blue}{#1}}}
\newcommand{\lp}[1]{{\color{teal}{#1}}}

\title{On the Hasse principle for divisibility in elliptic curves}
\author{Jessica Alessandr\`i}
\author{Laura Paladino}
\date{\today}

\renewcommand{\thefootnote}{\arabic{footnote}}
\setcounter{footnote}{0}
	
\begin{abstract}
Let $p$ be  a prime number
and $n$ a positive integer. 
Let $\E$ be an elliptic curve defined over a number field $k$. 
 It is known that the local-global divisibility by $p$ holds in $\E/k$,
but for powers of $p^n$ counterexamples may appear. The validity or the failing of the Hasse principle depends
on the elliptic curve $\E$ and the field $k$ and, consequently, on the group  $\Gal(k(\E[p^n])/k)$.
For which kind of these groups does the principle hold? For which of
them can we find a counterexample? The answer to these questions was known for $n=1,2$, but
for $n\geq 3$ they were still open. 
We show some conditions on the generators of $\Gal(k(\E[p^n])/k)$  implying
an affirmative answer to the local-global divisibility by $p^n$ in $\E$ over $k$, for every $n\geq 2$.
We also prove that these conditions are necessary by producing counterexamples in the case when they do not hold. 
 These last results generalize
to every power $p^n$, a result obtained by Ranieri for $n=2$.
\end{abstract}
\maketitle

\noindent\textbf{Keywords}: Local-global divisibility, elliptic curves, Galois cohomology. 
\medskip

\noindent\textbf{Mathematics~Subject~Classification~(2020)}: Primary 11R34, 11G05; Secondary 14K02, 14G05. 
    
\section{Introduction}
Since 2001 various authors have been concerned with the local-global divisibility problem in commutative algebraic groups posed by Dvornicich and Zannier (see \cite{Dvornicich2001Local-globalGroups, Dvornicich2007OnInteger, Illengo2008CohomologyTorus, Paladino2012OnCurves, Creutz2016OnCurves, Gillibert2018OnVarieties, Alessandri2024LocalglobalTori, Creutz2023TheCurves}
among others) and some related questions (see \cite{Ciperiani2015Weil-ChateletCassels, Creutz2013LocallyDivisible, Dvornicich2022Local-globalGroups}).

\begin{Problem}[Dvornicich and Zannier, \cite{Dvornicich2001Local-globalGroups}]\label{prob:DZ}
	Let $q$ be a fixed positive integer. Let $\mathcal G$ be a commutative algebraic group defined over a number field $k$. Assume that a point $P \in \mathcal G(k)$ has the following property: for all but finitely many places $v$ of $k$ there exists $D_v \in \mathcal G(k_v)$ such that $P = qD_v$. Can we conclude that there exists $D \in \mathcal G(k)$ such that $P = qD$?
\end{Problem}

It suffices to answer the question for every power $p^n$ of prime numbers $p$ to get an answer for every positive integer $q$.
Problem \ref{prob:DZ} was motivated by a particular case of the Hasse-Minkowski Theorem on quadratic forms and by the Grunwald-Wang Theorem, which gives an answer to the problem in split tori of dimension 1 (see \cite{Dvornicich2022Local-globalGroups} for further details). 
A complete answer for algebraic tori of every dimension (even non-split) has been recently given in \cite{Alessandri2024LocalglobalTori}. In the case
of an abelian variety of dimension $g$, some sufficient conditions to get an affirmative answer are presented in \cite{Gillibert2018OnVarieties}. The most  studied case was that of abelian varieties $\E$ of dimension 1.  It is well-known that in elliptic curves defined over number fields the local-global divisibility by $p$ holds (see for instance \cite[Theorem 3.1]{Dvornicich2001Local-globalGroups}). Instead for powers $p^n$, with $n\geq 2$ counterexamples may appear (see \cite{Dvornicich2004, Paladino2012OnGroups, Ranieri2018CounterexamplesCurves} among others).  Counterexamples  over $\QQ$ are known for power $p^n$ of $p=2,3$ \cite{Paladino2012OnGroups, Creutz2016OnCurves}, for every $n\geq 2$, and over $\QQ(\zeta_3)$ are known for powers $3^n$ \cite{Paladino2010}, for every $n\geq 2$ too. Those
give also counterexamples  in all number fields $k$ linearly disjoint from $\QQ$ and respectively $\QQ(\z_3)$ (see Remark \ref{raising}).
Instead the question 
when $p\geq 5$ is not well understood yet. Then from now on we will assume $p\geq 5$. 
Let $K_n:=k(\E[p^n])$ denote the $p^n$-division field of $\E$ over $k$.  In \cite{Paladino2012OnCurves}, the authors
give conditions on the structure of the group $\Gal(K_1/k)$, sufficient to the validity of the local-global principle for $p^n$, with $n\geq 2$,  where $k$ is a number field not containing $\QQ(\z_p+\bar{\z_p})$. This last request on the field is
necessary, as showed in \cite[Section 6]{Paladino2014OnPrinciple}. In \cite{Ranieri2018CounterexamplesCurves} Ranieri investigated more the structure of such a Galois group and showed conditions on all possible $\textrm{Gal}(K_1/k)$ giving counterexamples for the divisibility by $p^2$ (see \cite[Theorem 2]{Ranieri2018CounterexamplesCurves}). Moreover, in \cite[Proposition 7]{Paladino2014OnPrinciple}, it is showed that if there exists a counterexample for $p^n$, then $\textrm{Gal}(K_2/k)$ can be put in triangular form. Nevertheless, when $n\geq 2$ (especially when $n\geq 3$) the problem of finding all possible Galois groups $\textrm{Gal}(K_n/k)$ assuring the validity of the Hasse principle for divisibility by $p^n$, remained in general open.
In this paper, with Theorem \ref{thm:conditions} we answer this question in the open cases, by giving sufficient conditions for the generators of $\textrm{Gal}(K_n/k)$ to have the validity of the local-global principle for divisibility by $p^n$,  for every $n\geq 1$.  Furthermore, in Section \ref{sec4} we show that these hypotheses are necessary by exhibiting counterexamples in the case when they are not satisfied, for every $n\geq 2$ (see Theorem \ref{thm:counterexamples} and Corollary \ref{n+s} and notice that the bound $n\geq 2$ is best possible, since the local-global principle holds for divisibility by $p$ as mentioned above). In this way we generalize to every power $p^n$, with $n\geq 2$, the results produced by Ranieri in \cite{Ranieri2018CounterexamplesCurves} for $p^2$.

\medskip
It is well known that an obstruction to the validity of Problem \ref{prob:DZ} is given by
the first local cohomology group (see \cite[Definition at pag. 321]{Dvornicich2001Local-globalGroups} and Equation \ref{h1loc} in Section \ref{sec2} for the definition of this group, see \cite[Proposition 2.1]{Dvornicich2001Local-globalGroups} and \cite[Theorem 3]{Dvornicich2007OnInteger} for the results about the relationship between its vanishing and the validity of the local-global principle).
Such a group is isomorphic to some modified Tate-Shafarevich group, as we recall in Section \ref{sec2} (see also \cite[\S 3]{Creutz2016OnCurves}, \cite[Proposition 4.1]{Dvornicich2022Local-globalGroups}).
The triviality of this modified Tate-Shafarevich group, along with assuring an affirmative answer to
Problem \ref{prob:DZ}, implies an affirmative answer also to the following second local-global question for divisibility of cohomology classes (see \cite[Theorem 2.1]{Creutz2016OnCurves} and \cite{Dvornicich2022Local-globalGroups}).

\begin{Problem} \label{prob2}  Let $q, t$ be positive integers, let $\sigma\in H^t(k,A)$ and let $res_v: H^t(k,A)\rightarrow H^t(k_v,A)$ be the restriction map.   Assume that for all but finitely many places $v$ of $k$ there exists $\tau_v\in H^t(k_v,A)$ such that $q\tau_v=res_v(\sigma)$. Can we conclude that there exists $\tau \in H^t(k,A)$, such that $q\tau=\sigma$?
\end{Problem}

Our hypotheses on $G_n$ to get an affirmative answer to Problem \ref{prob:DZ} also imply an affirmative answer to Problem \ref{prob2}, as we will see in next section (see also Corollary \ref{cor:prob2}).

\subsection*{Acknowledgements}This work began in February 2024, when the first author was a visiting guest at University of Calabria. She thanks the hosting university for its hospitality and financial support.
Both the authors are grateful to the Italian ``National Group for Algebraic and Geometric Structures, and their Application'' (GNSAGA - INdAM), of which they are members, for partially supporting this work.
The first author was also supported by UKRI Future Leaders Fellowship \texttt{MR/V021362/1} and by the Max-Planck Institute for Mathematics in Bonn, that she thanks for its hospitality and financial support.

\section{Sufficient conditions to the Local-global divisibility by \texorpdfstring{$p^n$}{pn}}
In this section we prove some of the main results of this work. In the first subsection we recall some well known facts about the translation of Problem \ref{prob:DZ} and Problem \ref{prob2} in a cohomological context. In this way, we also depict the relation between the two problems. In the second subsection, we recall what is known on the generators of $\Gal(k(\E[p^n])/k)$ and pick some particular elements of this group whose behaviour is related to the answers of the local-global questions, as we will prove in the rest of the paper. In particular in Subsection \ref{sec3} we show in which cases the answer is affirmative.

\subsection{First local cohomology group and Tate-Shafarevich group}  \label{sec2}
As above, we denote by $p$ a prime number, by $n$ a positive integer, by $k$ a number field and by $\mathcal E$ an elliptic curve defined over $k$. 
Let $K_n:=k(\E[p^n])$ and  $G_n:= \Gal(k(\E[p^n])/k)$, for all $n\geq 1$. 
In addition, we denote by $M_k$ the set of places of $k$, by $k_v$ the completion of $k$ at the place $v$ and by $G_{n,v}$ the
Galois group $\Gal((k(\E[p^n])_w/k_v)$, where $w$ is a place of $k(\E[p^n])$ extending $v$. For every field of characteristic zero $F$, we denote
by $\bar{F}$ a fixed algebraic closure of it and by $G_F$ the absolute Galois group $\Gal(\bar{F}/F)$. 

\medskip
Let $P\in \E(k)$ and  $W\in \E(\bar{k})$ such that $P=p^nW$.  Then we can define a cocycle
$Z=\{Z_{\sigma}\}_{\sigma\in G_n}$ of $G_n$ with values in $\E[p^n]$ by
$$Z_{\sigma}:=\sigma(W)-W, \quad \sigma\in G_n.$$

The hypotheses of Problem \ref{prob:DZ} assure the vanishing of the class of $Z$ in $\Hh^1(G_{n,v},\E[p^n])$, for every $v\in \Sigma$, where
 $\Sigma$ is the subset of $M_k$ containing all the places $v$ of $k$ satisfying the assumptions of the problem, while an affirmative answer would imply its vanishing in $\Hh^1(G_n,\E[p^n])$, see \cite[\S 2]{Dvornicich2001Local-globalGroups}, \cite[Proposition 3.1]{Dvornicich2022Local-globalGroups}.
 It is then natural to consider the group
\begin{equation} \label{h1loc}
\Hh^1_{\textrm{loc}}(G_n,\E[p^n]):=\bigcap_{v\in \Sigma} \ker\{ \Hh^1(G_n,\E[p^n])\xrightarrow{\makebox[1cm]{$\mathrm{res}_v$}} \Hh^1(G_{n,v},\E[p^n])\},
\end{equation}
whose triviality implies an affirmative answer to Problem \ref{prob:DZ}, as proved in \cite[Proposition 2.1]{Dvornicich2001Local-globalGroups}.
If $\Hh^1_{\textrm{loc}}(G_n,\E[p^n])$ is non-trivial, we instead have counterexamples in a finite extension of $k$ \cite[Theorem 3]{Dvornicich2007OnInteger}.
 Recall that $G_{n,v}$ varies among all the cyclic subgroups $\langle \sigma\rangle$ of $G_n$ as $v$ varies in $\Sigma$. 
Then, as stated in \cite[Definition at pag. 321]{Dvornicich2001Local-globalGroups}, the classes $[Z]\in\Hh^1_\loc(G_n, \E[p^n])$ are classes of cocycles $Z=\{Z_\sigma\}_{\sigma \in G_n}$ satisfying the so-called \emph{local conditions}, i.e.\ for every $\sigma\in G_n$,  there exists $W_\sigma \in \E[p^n]$ such that  $Z_\sigma = (\sigma-1)W_\sigma$.

The definition of the group  $\Hh^1_{\textrm{loc}}(G_n,\E[p^n])$ is very similar to that of the Tate-Shafarevich group: 

\[
    \Sha(k,\E[p^n]) :=\bigcap_{v\in M_k} \ker \{ \Hh^1(G_k,\E[p^n]) \xrightarrow{\makebox[1cm]{$\mathrm{res}_v$}} \Hh^1(G_{k_v},\E[p^n])\}.
\]

If in the last definition we let $v$ vary in $\Sigma$ instead of $M_k$, we get 

\[\Sha_{\Sigma}(k,\E[p^n]):=\bigcap_{v\in \Sigma} \ker \{ \Hh^1(G_k,\E[p^n])\xrightarrow{\makebox[1cm]{$\mathrm{res}_v$}} \Hh^1(G_{k_v},\E[p^n])\}.\]

By \cite[Lemma 3.3]{Creutz2012AVarieties} (see also \cite[Proposition 4.1]{Dvornicich2022Local-globalGroups}) we have that $\Hh^1_{\textrm{loc}}(G_n,\E[p^n])$ is isomorphic to $\Sha_{\Sigma}(k,\E[p^n])$. In particular, the triviality of $\Hh^1_{\loc}(G_n,\E[p^n])$ implies $\Sha(k,\E[p^n])=0$.
By \cite[Theorem 2.1]{Creutz2016OnCurves}, the last equality assures and affirmative answer to Problem \ref{prob2}.

\subsection{Sufficient conditions to the local-global divisibility} \label{sec3}
In this section we give sufficient conditions on the structure of $G_n$ to have the validity of the local-global principle. The strategy of the proof is showing that the first local cohomology group, defined in the previous subsection, vanishes under those hypotheses. We are going to describe  some particular elements of $G_n$.\\

 We want to restrict to the cases when an answer to the problem is not known yet. For this purpose we are going to make some assumptions on $k$ and $\E$. \normalcolor 
 We assume that $k$ does not contain $\QQ(\z_{p}+\bar{\z_p})$ (otherwise, one can find counterexamples to the local-global divisibility by $p^n$, as showed in \cite[Section 6]{Paladino2014OnPrinciple}).  If $\E$ has no $k$--rational points of exact order $p$,  Theorem 1 in \cite{Paladino2012OnCurves} assures an affirmative answer to the local-global divisibility by $p^n$. Thus, whenever
 $G_n$ is a group  whose reduction modulo $p$ cannot be put in the form

 $$ \begin{pmatrix}
    1 & \star \\ 0 & \star
\end{pmatrix}$$

 \noindent for every basis of $\E[p^n]$, we have $\Hh^1_{\textrm{loc}}(G_n,\E[p^n])=0$. 
 Then we can assume that $\E$ admits a $k$-rational point of exact order $p$ and  there exists a basis of $\E[p]$ such that every element of $G_1$ can be represented in $\GL_2(\Z/p\Z)$ as a matrix of the form
\[ \begin{pmatrix}
    1 & \star \\ 0 & \chi_p
\end{pmatrix},\]
where $\chi_p$ is the cyclotomic character modulo $p$. In addition,
we can assume that $G_1$ is cyclic of order dividing $p-1$, and it is generated by a matrix of the form
\[ \rho_1 = \begin{pmatrix}
    1 & 0\\
    0 & \lambda_1 
\end{pmatrix}.\]
Otherwise, by combining \cite[Lemma 8]{Paladino2012OnCurves} and \cite[Proposition 6]{Paladino2014OnPrinciple}, we have $\Hh_\loc^1(G_n,\E[p^n])=0$ as well, and the local-global principle for divisibility by $p^n$ holds in $\E$ over $k$. \normalcolor
The element $\lambda_1$ (and so $\rho_1$) has order dividing $p-1$ and greater than or equal to $3$, since $k$ does not contain $\Q(\z_p + \bar{\z_p})$.
 Observe that there exists such a $\lambda_1$, for every $p\geq 5$ (that was our assumption on $p$ from the beginning, see also Remark \ref{raising}).  \normalcolor 
By \cite[Lemma 10]{Paladino2012OnCurves}, we can choose a basis of $\mathcal E[p^n]$ such that $\rho_1$ admits a lift
\[ \rho_n = \begin{pmatrix}
    1 & 0\\
    0 & \lambda_n 
\end{pmatrix},\]
with $\lambda_n \equiv \lambda_1 \mod p$. We fix this basis $\{Q_1, Q_2\}$ for $\E[p^n]$ and we also fix the basis $\{p^{n-i}Q_1, p^{n-i}Q_2\}$ of $\mathcal E[p^i]$, for every $1\leq i\leq n-1$. Observe that $p^{n-1}Q_1$ is a $k$-rational $p$-torsion point of $\E$.
 In addition from now on we assume that $G_2$ is in upper triangular form or in lower triangular form; otherwise, by \cite[Proposition 7]{Paladino2014OnPrinciple} we would have $\Hh_\loc^1(G_n,\E[p^n])=0$. 

Let $\mathcal{D}_n$, $s\mathcal{U}_n$, $s\mathcal{L}_n$ be respectively the group of the diagonal, strictly upper triangular and strictly lower triangular matrices in $G_n$.
By \cite[Proposition 12]{Paladino2012OnCurves} the matrices in $G_n$ decompose as products of elements in these subgroups, then in particular $G_n=\langle \mathcal{D}_n, s\mathcal{U}_n, s\mathcal{L}_n\rangle$. Furthermore, by \cite[Proposition 12]{Paladino2012OnCurves} (see also \cite[Lemma 5]{Paladino2014OnPrinciple}), the groups $s\mathcal{L}_n$ and $s\mathcal{U}_n$ are cyclic and are respectively generated by
$$\tau_L=\begin{pmatrix}
    1 & 0 \\
     p^{j} & 1 \\
\end{pmatrix},$$
where $p^j$ is the smallest power of $p$ dividing the entries $c>1$ of elements of $s\mathcal{L}_n$ and 
by
$$\tau_U=\begin{pmatrix}
    1 & p^i \\
     0 & 1 \\
\end{pmatrix},$$
where $p^i$ is the smallest power of $p$ dividing the entries $b>1$ of elements of $s\mathcal{U}_n$, when $b>1$. Observe
that $j\geq 1$ and $i\geq 1$, by our assumption that $G_1$ is cyclic of order $p-1$, generated by $\rho_1$. By the definitions of $\tau_L$ and
$\tau_U$, we have
$G_n=\langle \mathcal{D}_n, \tau_U, \tau_L \rangle$.

\bigskip

Since $G_1 = \langle \rho_1 \rangle$, every matrix in $\mathcal D_n$ is of the form
\begin{equation} \label{eq:diagonali}
\begin{pmatrix}
    1+a p^t & 0 \\
   0 & \mu \\
\end{pmatrix},
\end{equation}
where  $t\geq 1$ is an integer, $a \in (\Z/p^n\Z)^*$ and $\mu \equiv \lambda_1^k \mod{p}$, for some integer $k$. We choose $m$ to be the minimum of all such integers $t$. Notice that in particular $m\geq 1$. Let $\tilde\delta = \begin{pmatrix}
    1+p^m a & 0 \\
   0 & \mu \\
\end{pmatrix}$ be a matrix associated to $m$. In the proof of \cite[Proposition 7, pag.\ 300]{Paladino2014OnPrinciple} it is showed that, since $a$ is invertible, there exists an integer $l$ such that $(1+p^m a)^l \equiv 1+p^m \mod{p^n}$;
moreover, by taking $(\tilde\delta \rho_n^{-k})^l$, one can find in $G_n$ the following matrix
$$\delta:=(\tilde\delta \rho_n^{-k})^l=\begin{pmatrix}
    1+p^m & 0 \\
   0 & 1+p^hd \\
\end{pmatrix},$$
with $h\geq 1$ an integer and $d \in (\Z/p^n\Z)^*$. 

We are going to observe that with the same argument as in the proof of \cite[Proposition 12]{Paladino2014OnPrinciple} (by swapping the role of $\delta$ and $\tau_L$), one can assume without loss of generality that the class of a cocycle $[Z]=[\{Z_{\sigma}\}_{\sigma \in G_n}]$ in $\mathrm{H}^1_{\mathrm{loc}}(G_n, \mathcal E[p^n])$ has a representative with $Z_\delta=(p^m\beta,0)$, for some $\beta \in \Z/p^n\Z$, and $Z_{\tau_L}=Z_{\tau_U}=Z_{\rho_n}=(0,0)$. 

\begin{lemma} \label{Q}
Let $c \in \Hh^1_{\mathrm{loc}}(G_n, \E[p^n])$. Then there exists a cocycle $Z$ of $G_n$ with values in $\E[p^n]$, such that $[Z] = c$, and
\[ Z_{\tau_U} =(0,0), \quad Z_{\tau_L}=(0,0), \quad Z_{\rho_n}=(0,0),\]
\[ Z_{\delta} = (p^m\beta, 0), \quad \text{for some } \beta \in \Z/p^n\Z. \]
\end{lemma}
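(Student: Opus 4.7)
The plan is to modify an arbitrary representative $Z$ of $c$ by a single coboundary: we seek $W = x_0 Q_1 + y_0 Q_2 \in \E[p^n]$ such that $Z' := Z - dW$, where $(dW)_\sigma := (\sigma-1)W$, takes the prescribed values on the four distinguished elements $\rho_n$, $\tau_L$, $\tau_U$, $\delta$. First, the local conditions force, with respect to the basis $\{Q_1,Q_2\}$,
\[
Z_{\rho_n}=(0,c_\rho),\quad Z_{\tau_L}=(0,p^j c_L),\quad Z_{\tau_U}=(p^i c_U,0),\quad Z_\delta=(p^m\alpha,p^h d\gamma),
\]
for suitable $c_\rho, c_L, c_U, \alpha, \gamma \in \Z/p^n\Z$; this is immediate from the matrix shapes of $\rho_n - 1$, $\tau_L - 1$, $\tau_U - 1$, $\delta - 1$ acting on $\E[p^n]$.

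A direct computation of $(\sigma - 1)W$ for each $\sigma$ shows that $Z' = Z - dW$ meets the four desired conditions exactly when
\[
y_0 \equiv c_\rho(\lambda_n-1)^{-1}\pmod{p^n},\qquad x_0 \equiv c_L \pmod{p^{n-j}},
\]
\[
y_0 \equiv c_U \pmod{p^{n-i}},\qquad y_0 \equiv \gamma \pmod{p^{n-h}},
\]
in which case $\beta := \alpha - x_0$ does the job. Note that $\lambda_n - 1$ is a unit in $\Z/p^n\Z$, since $\lambda_n \equiv \lambda_1 \not\equiv 1 \pmod p$. The first two congruences then pin down an admissible $W$; the last two appear as compatibility constraints that must be extracted from the cocycle identity.

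Verifying these compatibilities is the heart of the proof, and the place where I would exploit the structure of $G_n$. From the matrix forms of the generators one checks the two relations $\rho_n \tau_U \rho_n^{-1} = \tau_U^{\lambda_n^{-1}}$ and $\rho_n \delta = \delta \rho_n$. Feeding $\rho_n \tau_U = \tau_U^{\lambda_n^{-1}}\rho_n$ into the cocycle identity $Z_{\sigma\tau}=Z_\sigma+\sigma Z_\tau$, and using that $\tau_U$ fixes every vector with vanishing second coordinate (so $Z_{\tau_U^k}=k\,Z_{\tau_U}$), one obtains $(\lambda_n-1)c_U \equiv c_\rho \pmod{p^{n-i}}$. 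Applied to $\rho_n\delta=\delta\rho_n$ and cancelling the unit factor $d$, the same identity produces $(\lambda_n-1)\gamma \equiv c_\rho\pmod{p^{n-h}}$. These are precisely the two compatibilities we need, and the conclusion follows by a direct check that $Z'$ has the asserted form. The main obstacle is simply recognising that these two commutation relations provide the bridge from the $\rho_n$-component of $Z$ to its $\tau_U$- and $\delta$-components; everything else reduces to routine matrix arithmetic.
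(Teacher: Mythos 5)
Your proof is correct, and it takes a genuinely different route from the paper's. The paper restricts $[Z]$ to the three subgroups $\mathcal D_n$, $\langle\rho_n,s\mathcal U_n\rangle$, $\langle\rho_n,s\mathcal L_n\rangle$, invokes \cite[Proposition 17]{Paladino2012OnCurves} to say each restricted $\Hh^1_{\loc}$ vanishes, producing three ``global potentials'' $Q,P,R$, then shifts by the single coboundary $d(-R)$ and deduces $P,Q\in\ker(\rho_n-1)$ from $Z_{\rho_n}=0$, which forces $Z_{\tau_U}=(\tau_U-1)P=(0,0)$ and $Z_\delta=(\delta-1)Q=(p^m\beta,0)$. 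You instead bypass the cited vanishing result and construct the coboundary vector $W=(x_0,y_0)$ directly: the local condition gives the asserted matrix-shape constraints on $Z_{\rho_n},Z_{\tau_L},Z_{\tau_U},Z_\delta$, the $\rho_n$-equation pins down $y_0=c_\rho(\lambda_n-1)^{-1}$ (using that $\lambda_n-1$ is a unit), and the needed compatibilities $(\lambda_n-1)c_U\equiv c_\rho\pmod{p^{n-i}}$ and $(\lambda_n-1)\gamma\equiv c_\rho\pmod{p^{n-h}}$ fall out of the cocycle identity applied to $\rho_n\tau_U\rho_n^{-1}=\tau_U^{\lambda_n^{-1}}$ and $\rho_n\delta=\delta\rho_n$, both of which I checked. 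Your argument is more self-contained (no external vanishing lemma) and makes the mechanism fully explicit. The trade-off is that the paper's route delivers, with no extra work, the slightly stronger conclusion that (after the shift) $Z_\omega=(\omega-1)(\beta,0)$ for \emph{every} $\omega\in\mathcal D_n$ and $Z$ vanishes on all of $\langle\rho_n,s\mathcal L_n\rangle$ and $\langle\rho_n,s\mathcal U_n\rangle$, not merely on the four listed generators; this stronger form is exactly what is invoked in the opening of the proof of Theorem \ref{thm:conditions}, so if one adopted your proof of the lemma as stated, a little additional bookkeeping (running the same cocycle-identity argument against arbitrary elements of $\mathcal D_n$, $s\mathcal U_n$, $s\mathcal L_n$ rather than just $\delta$, $\tau_U$, $\tau_L$) would be needed there.
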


\begin{proof}
    As mentioned above, the argument is very similar to the one given in \cite[Proposition 12]{Paladino2014OnPrinciple}. However, for the reader's convenience we state it here in details.   We consider the image of $Z$ through the three restrictions from $G_n$ to $\mathcal D_n$, from $G_n$ to $\langle \rho_n, s\mathcal U_n \rangle$ and from $G_n$ to $\langle \rho_n, s\mathcal L_n \rangle$.
    We still denote with $[Z]$ the images of the class in $\mathrm{H}^1_{\mathrm{loc}}( \mathcal D_n, \mathcal E[p^n])$, in $\mathrm{H}^1_{\mathrm{loc}}(\langle \rho_n, s\mathcal U_n \rangle, \mathcal E[p^n])$ and in $\mathrm{H}^1_{\mathrm{loc}}(\langle \rho_n, s\mathcal L_n \rangle, \mathcal E[p^n])$. By \cite[Proposition 17]{Paladino2012OnCurves}, all of these groups are trivial, thus
    \[ \exists \, Q \in \mathcal E[p^n] \text{ s.t. } \forall \omega \in \mathcal D_n \quad Z_{\omega} = \omega(Q) - Q, \]
    \[ \exists \, P \in \mathcal E[p^n] \text{ s.t. } \forall \gamma \in \langle \rho_n, s\mathcal U_n \rangle \quad Z_{\gamma} = \gamma(P)-P, \]
    \[ \exists \, R \in \mathcal E[p^n] \text{ s.t. } \forall \theta \in \langle \rho_n, s\mathcal L_n \rangle \quad Z_{\theta} = \theta(R)-R. \]
     By adding to $Z$ the coboundary $Z_{\sigma} = \sigma(-R) - (-R)$, we may assume, without  loss of generality, that $R = (0,0)$, i.e.\ $Z_{\theta} = (0,0)$ for every $\theta \in \langle \rho_n, s\mathcal L_n \rangle$. Observe that $\rho_n$ lies in $\mathcal D_n$, $\langle \rho_n, s\mathcal U_n \rangle$ and $\langle \rho_n, s\mathcal L_n \rangle$, so that
    \[ Z_{\rho_n} = \rho_n(Q)-Q = \rho_n(P)-P = \rho_n(R)-R = (0,0). \]
    Therefore, both the point $Q$ and the point $P$ lie in $\ker(\rho_n-1)$. Hence $P=(\alpha,0)$ and $Q=(\beta,0)$, for some
    $\alpha, \beta \in \Z/p^n\Z$.
    With respect to the the matrix $\tau_U$, which is the generator of $s\mathcal U_n$, we have
    \[ Z_{\tau_U} = \tau_U(P)-P = \begin{pmatrix} 0 & p^i\\ 0 & 0 \end{pmatrix} \begin{pmatrix} \alpha \\ 0 \end{pmatrix} = \begin{pmatrix}  0 \\ 0 \end{pmatrix}.
    \]
    On the other hand, with respect to the matrix $\delta$, the image of the cocycle $Z$ is
    \[ Z_\delta = \delta(Q)-Q = \begin{pmatrix} p^m & 0 \\ 0 & p^hd \end{pmatrix} \begin{pmatrix} \beta \\ 0 \end{pmatrix} = \begin{pmatrix} p^m\beta \\ 0 \end{pmatrix}. \]
\end{proof}

\noindent Observe that if $G_n$ is in upper triangular form (with respect to the fixed basis $\{Q_1,Q_2\}$), then as a straightforward consequence of Lemma \ref{Q}, we get that every cocycle $Z$ of $G_n$ with values in $\E[p^n]$ vanishes in $H^1_\loc(G_n,\E[p^n])$. 

\begin{corollary}\label{prop:uptriang}
    If $G_n$ is contained in the group of the upper triangular matrices, then $\Hh^1_{\mathrm{loc}}(G_n, \mathcal E[p^n]) = 0$
    and the local-global divisibility by $p^n$ holds in $\E$ over $k$. 
\end{corollary}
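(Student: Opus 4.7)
The plan is to exhibit, for any class $c \in \Hh^1_{\mathrm{loc}}(G_n, \E[p^n])$, a single point $W \in \E[p^n]$ that trivializes a convenient representative cocycle $Z$, so that $[Z] = 0$. Once the vanishing $\Hh^1_{\mathrm{loc}}(G_n, \E[p^n]) = 0$ is established, the validity of the local-global divisibility by $p^n$ follows directly from \cite[Proposition 2.1]{Dvornicich2001Local-globalGroups}, as recalled in Subsection \ref{sec2}.

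The structural input to exploit is that, when $G_n$ lies in the upper triangular group, the subgroup $s\mathcal{L}_n$ is trivial, so $\tau_L = I$ and $G_n = \langle \mathcal{D}_n, \tau_U \rangle$. Starting from a representative $Z$ as produced by Lemma \ref{Q}, I would use the finer information sitting inside its proof: there is a point $Q = (\beta, 0) \in \E[p^n]$ such that $Z_\omega = \omega(Q) - Q$ for every $\omega \in \mathcal{D}_n$. Setting $W := Q$, the coboundary identity $Z_\sigma = \sigma(W) - W$ holds on all of $\mathcal{D}_n$ by construction, and on the remaining generator $\tau_U$ it holds because
\[
\tau_U(W) - W = \begin{pmatrix} 1 & p^i \\ 0 & 1 \end{pmatrix} \begin{pmatrix} \beta \\ 0 \end{pmatrix} - \begin{pmatrix} \beta \\ 0 \end{pmatrix} = \begin{pmatrix} 0 \\ 0 \end{pmatrix} = Z_{\tau_U},
\]
where the last equality is from Lemma \ref{Q}. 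The multiplicative cocycle relation $Z_{\sigma\eta} = Z_\sigma + \sigma(Z_\eta)$ then propagates the coboundary identity from this generating set to the whole of $G_n$, so $[Z] = 0$.

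The only step requiring a little care is that Lemma \ref{Q} explicitly records the values of $Z$ only on $\rho_n, \tau_U, \tau_L$ and $\delta$, while here I need the full statement that the restriction of $Z$ to $\mathcal{D}_n$ is the coboundary of the point $(\beta, 0)$. This is precisely the intermediate content produced inside the proof of Lemma \ref{Q} from the triviality of $\Hh^1_{\mathrm{loc}}(\mathcal{D}_n, \E[p^n])$, and it is preserved by the subsequent $R$-normalization (which merely replaces $Q$ by $Q - R$, still of the form $(\beta', 0)$ since $Z_{\rho_n} = 0$ forces $Q \in \ker(\rho_n - I)$). No further obstacle is expected: the upper triangular shape kills the only "troublesome" generator $\tau_L$, and the stability of $\ker(\rho_n - I) = \{(\ast, 0)\}$ under $\tau_U$ is exactly what allows a single $W$ to trivialize $Z$ simultaneously on the diagonal and unipotent parts.
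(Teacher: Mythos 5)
Your proof is correct and fills in exactly the details that the paper leaves implicit when it asserts the corollary is ``a straightforward consequence of Lemma~\ref{Q}'': you correctly extract from the interior of that proof the stronger fact that $Z_\omega = (\omega-1)(\beta,0)$ for all $\omega\in\mathcal{D}_n$ (which the paper itself later invokes in the proof of Theorem~\ref{thm:conditions}), note that $\tau_L=1$ in the upper triangular case so $G_n=\langle\mathcal{D}_n,\tau_U\rangle$, and observe that $(\beta,0)$ is fixed by $\tau_U$, so the single point $W=(\beta,0)$ trivializes $Z$ on a generating set and hence on all of $G_n$ by the cocycle relation. This is the intended argument.
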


 \noindent   Notice that if $G_n$ is in upper triangular form, then there is a cyclic subgroup of $\mathcal E[p^n]$ of order $p^n$, stable under the Galois action and generated by the first element of the basis, which we chose to be a lifting of a $k$--rational point of exact order $p$. In particular, $\mathcal E$ has a cyclic $k$--rational isogeny of order $p^n$.

\bigskip
 Applying Lemma \ref{Q}, if $m\geq n$, by the minimality of $m$, we have $Z_\sigma=0$, for every $\sigma\in G_n$, implying $\mathrm{H}_{\mathrm{loc}}^1(G_n, \mathcal E[p^n]) = 0$. We have already observed that $m\geq 1$. Hence from now on we assume $1\leq m<n$. In addition, in view of Corollary \ref{prop:uptriang}, we suppose that $G_n$ is not in upper triangular form. 
Notice that in particular we are asssuming that $G_n$ is not in diagonal form (in fact also by \cite[Proposition 11]{Paladino2014OnPrinciple}
we have that  $\mathrm{H}_{\mathrm{loc}}^1(\mathcal{D}_n, \mathcal E[p^n]) = 0$). 

We are going to show  that in many cases we still have an affirmative answer to the problem, even under the assumption that $G_1$ is cyclic generated by $\rho$. Therefore we are going to refine the criterium obtained by combining \cite[Lemma 8]{Paladino2012OnCurves} and \cite[Proposition 6]{Paladino2014OnPrinciple}, by proving the following.

\begin{theorem}\label{thm:conditions}
 With the definitions of $1 \leq i$, $1\leq j<n$, $1\leq m<n$ and $1 \leq h$  as above, if $i \leq h+ |j-m|$ then $\mathrm{H}_{\mathrm{loc}}^1(G_n, \mathcal E[p^n]) = 0$.
\end{theorem}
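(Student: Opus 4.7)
The plan is to show that under the hypothesis $i\le h+|j-m|$, every class in $\Hh^1_{\loc}(G_n,\E[p^n])$ is trivial; the strategy generalizes that of Ranieri for $n=2$.

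First I apply Lemma \ref{Q} to replace a representative $Z$ of a class $[Z]\in\Hh^1_{\loc}(G_n,\E[p^n])$ by one with
\[
Z_{\rho_n}=Z_{\tau_U}=Z_{\tau_L}=0,\qquad Z_\delta=(p^m\beta,0)
\]
for some $\beta\in\Z/p^n\Z$. Since $Z$ then vanishes on three of the four generators of $G_n$, it suffices to prove $\beta\equiv 0\pmod{p^{n-m}}$: that forces $Z_\delta=0$, so $Z$ is identically zero on $G_n$ and $[Z]=0$.

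The heart of the argument is to test the local condition on elements of $G_n$ of the form $\sigma_{a,b}=\tau_U^a\delta\tau_L^b$ (and the symmetric variant $\tau_L^a\delta\tau_U^b$) for carefully chosen integer exponents $a,b$. A direct cocycle computation gives $Z_{\sigma_{a,b}}=\tau_U^a Z_\delta=(p^m\beta,0)$, because $\tau_U^a$ fixes the first basis vector. Writing $\sigma_{a,b}-I$ explicitly, one sees that its determinant equals, up to a unit factor, $p^{m+h}d-ab(1+p^h d)p^{i+j}$. I would choose $a,b$ so that this determinant vanishes modulo $p^n$, which amounts to $v_p(ab)=m+h-i-j$; such a choice is admissible exactly when $i+j\le m+h$, i.e.\ $i\le h+(m-j)$, matching the theorem's hypothesis in the case $j\le m$. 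With this choice, the image of $\sigma_{a,b}-I$ in $\E[p^n]$ becomes rank-one (its two columns become proportional), and analyzing when $(p^m\beta,0)$ lies in that image --- by comparing the two coordinates of the defining linear system via the relation coming from $\det\equiv 0$ --- collapses the equation to $p^m\beta\equiv 0\pmod{p^n}$, which is the required divisibility. The case $j>m$ is handled by a symmetric construction based on the dual element $\tau_L^a\delta\tau_U^b$, exploiting the bound $i\le h+(j-m)$ on the other side.

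The main obstacle will be the case analysis and the careful $p$-adic bookkeeping. In particular, I expect difficulties in: (i) verifying that admissible integer exponents $a,b$ can be chosen within the orders of $\tau_U,\tau_L$ (which are $p^{n-i}$ and $p^{n-j}$), especially when one must split $v_p(ab)$ between $v_p(a)$ and $v_p(b)$ in the regime $i<m$, so that the columns of $\sigma_{a,b}-I$ actually line up; (ii) treating the sub-cases $i<h$ or $j<h$, where the second coordinate of the linear system contributes extra divisibility constraints that need to be combined with the one coming from the vanishing determinant; and (iii) handling the dual case $j>m$, where the asymmetry between $\tau_U$ and $\tau_L$ (arising from the $k$-rational $p$-torsion point being in the first basis direction) makes the construction more delicate than a mere swap. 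The automatic identity $p^{m+j}\beta\equiv 0\pmod{p^n}$ coming from $Z_{[\delta,\tau_L]}=0$ provides a useful starting point that interacts with the local-condition constraints at the boundaries.
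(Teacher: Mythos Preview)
For the case $j\le m$ your plan is correct and coincides with the paper's: one tests the local condition on an element of the shape $\delta\tau_L^c\tau_U^b$ (you write $\tau_U^a\delta\tau_L^b$, which is equivalent), chooses the free exponents so that one row of $\sigma-I$ becomes $p^{m-j}$ times the other, and reads off $p^m\beta=0$ from the degeneracy of the linear system. The paper makes the choices explicit ($c=(1+p^hd)^{-1}$ and $b=p^{h+m-j-i}d$), but the mechanism is exactly the one you describe.

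The case $j>m$, however, has a genuine gap. Your ``symmetric construction based on $\tau_L^a\delta\tau_U^b$'' cannot produce the bound $i\le h+(j-m)$: a direct computation shows that the determinant of $\tau_L^a\delta\tau_U^b-I$ is, up to a unit, again $p^{m+h}d-ab\,p^{i+j}$, so making it vanish still forces $i+j\le m+h$, i.e.\ $i\le h+(m-j)$ --- the wrong sign. The exponents $m$ and $h$ sitting on the diagonal of $\delta$ are fixed and do not flip when you reorder the factors. The paper repairs this with two ingredients you have not identified. First, it switches to the alternative normalization $Z_\delta=Z_{\tau_U}=(0,0)$, $Z_{\tau_L}=(0,p^j\beta)$, available from \cite[Proposition~12]{Paladino2014OnPrinciple}; with this representative the quantity to annihilate is $p^j\beta$ rather than $p^m\beta$. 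Second --- and this is the decisive trick --- it replaces $\delta$ by the power $\delta^{p^{j-m}}$, whose diagonal entries are $1+p^ja'$ and $1+p^{h+j-m}e$ with $a',e$ units. This shifts the effective exponents from $(m,h)$ to $(j,\,h+j-m)$, and now the degeneracy condition for $\tau_L^{a}\,\delta^{p^{j-m}}\,\tau_U^{b}$ reads precisely $i\le h+(j-m)$, after which one concludes $p^j\beta a=0$ and hence $Z_{\tau_L}=0$. You correctly flag (iii) as ``more delicate than a mere swap'', but your proposal supplies neither the change of normalization nor the passage to $\delta^{p^{j-m}}$, and without them the argument for $j>m$ does not close.
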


\begin{proof}
    We treat separately the case when $j \leq m$ and the case when $j>m$.

    \medskip
    \textbf{Case $j \leq m$.}
    Let $[Z]=[\{Z_{\sigma}\}_{\sigma \in G_n}]$ be the class of a cocycle in $\mathrm{H}^1_{\mathrm{loc}}(G_n, \mathcal E[p^n])$. 
    As seen in the proof of Lemma \ref{Q}, we can assume without loss of generality that $Z_{\tau_L}=Z_{\tau_U}=(0,0)$ and that there exists $\beta \in \Z/p^n\Z$, such that $Z_{\omega}=(\omega-1)(\beta,0)$ for every matrix $\omega \in \mathcal D_n$. In particular $Z_\delta=(p^m\beta,0)$.
    Consider the following matrix in $G_n$ 
    
    \[\delta\tau_L^c\tau_U^b=\begin{pmatrix}
     1+p^m & p^ib(1+p^m)\\
     p^jc(1+p^hd) & 1+p^hd + p^{i+j}bc(1+p^hd)\\
     \end{pmatrix},\]

    \noindent with $b,c \in \ZZ$. By the property of being a cocycle, we have 
    $$Z_{\delta\tau_L^c\tau_U^b}= Z_\delta+ \delta (Z_{\tau_L^c\tau_U^b})=Z_\delta=(p^m\beta, 0),$$
    
    \noindent because of $Z_{\tau_L}=Z_{\tau_U}=(0,0)$, which implies $Z_{\tau_L^c\tau_U^b}=(0,0)$. Since $Z$ satisfies the local conditions, there exist $x,y \in \Z/p^n\Z$ such that 
    $$Z_{\delta\tau_L^c\tau_U^b} = (\delta\tau_L^c\tau_U^b-1)(x,y)= (p^m\beta,0).$$ 
    
    \noindent Hence we have the following system of equations:
    \[\begin{cases}
     p^mx + p^ib(1+p^m)y = p^m\beta\\
     p^jc(1+p^hd)x + p^hdy + p^{i+j}bc(1+p^hd)y = 0.\\
    \end{cases}
    \]
    We can choose $\tilde{c} = (1+p^hd)^{-1}$ and so, considering the local conditions with respect to $\delta\tau_L^{\tilde{c}}\tau_U^b$, we get the system
    \[\begin{cases}
     p^mx + p^ib(1+p^m)y = p^m\beta\\
     p^jx + p^hdy + p^{i+j}by = 0.\\
    \end{cases}
    \]
    Because of $i < h+m-j$, we can set $b = p^{h+m-j-i}d$ and rewrite the system as
    \[\begin{cases}
     p^{m-j}(p^jx + p^{h}d(1+p^m)y) = p^m\beta\\
     p^jx + p^hd(1 + p^{m})y = 0.\\
    \end{cases}
    \]
    This implies $p^m\beta =0$.
    Since every matrix $\omega\in \mathcal D_n$ is of the form \eqref{eq:diagonali}, by the minimality of $m$, we have that $Z_{\omega} = (\omega-1)(\beta,0) = (0,0)$.
    By $G_n=\langle \mathcal D_n, s\mathcal L_n, s\mathcal U_n\rangle=\langle \mathcal D_n, \tau_L, \tau_U\rangle$, one can easily deduce that $Z_\sigma=(0,0)$, for every $\sigma\in G_n$. Hence $[Z]$ is a coboundary and $H^1_\loc(G_n,\E[p^n])=0$.

    \medskip
    \textbf{Case $j > m$.}
    In this case, given $[Z] = [\{Z_{\sigma}\}_{\sigma \in G_n}] \in \mathrm{H}^1_{\mathrm{loc}}(G_n, \mathcal E[p^n])$, we may assume without loss of generality as in \cite[Proposition 12]{Paladino2014OnPrinciple}, that $Z_{\delta} = Z_{\tau_U} = (0,0)$ and $Z_{\tau_L} = (0, p^j\beta)$, for some $\beta \in \Z/p^n\Z$. 
    Consider the power $\delta^{p^{j-m}}$, which is equal to
    \[\begin{pmatrix}
        1+p^j a & 0 \\
       0 & 1+p^{h+j-m}e \\
    \end{pmatrix},
    \]
    for some  $a,e\in (\Z/p^n\Z)^*$. By the property of being a cocycle, one sees that 
    $$Z_{\tau_L^{a} \delta^{p^{j-m}} \tau_U^b}=Z_{\tau_L^a}+\tau_{L}^a(Z_{\delta^{p^{j-m}}\tau_U^b})=Z_{\tau_L^a}= (0,p^j\beta a).$$
    Moreover, by considering the local conditions on $Z_{\tau_L^{a}\delta^{p^{j-m}}\tau_U^b}$,
    we have that there exist  solutions $x,y \in \Z/p^n\Z$ of the following system of equations
    \[
        \begin{cases}
            p^{j}ax+p^ib(1+p^ja)y=0 \\
            p^j(1+p^ja)a x+p^{h+j-m}ey+p^{i+j}ab(1+p^ja)y=p^{j}\beta a.\\
        \end{cases}
    \]

\noindent Since $(1+p^ja)$ is invertible, we can factor it out, i.e.\ 
\[
        \begin{cases}
            p^{j}ax+p^ib(1+p^ja)y=0 \\
          (1+p^ja)(p^ja x+p^{h+j-m}(1+p^ja)^{-1}ey+p^{i+j}aby)=p^{j}\beta a.\\
        \end{cases}
    \]
   Observe that if we can choose a particular $b$ such that
   \begin{equation} \label{b} p^ib(1+p^ja)=p^{h+j-m}(1+p^ja)^{-1}e+p^{i+j}ab, \end{equation}
 then we would get that the triviality of the left-hand side of the first equation would imply $p^{j}\beta a=0$
 in the second one. Equation \eqref{b} is equivalent to
 $$ p^{h+j-m}(1+p^ja)^{-1}e= p^ib(1+p^ja)-p^{i+j}ab,$$
 i.e.\ $p^{h+j-m}(1+p^ja)^{-1}e=p^ib$.
 We can choose $b = e(1+p^ja)^{-1}p^{h+j-m -i}$, that satisfies the assumption $h+j-m \geq i$, to get an equality. Therefore $p^j\beta a= 0$.
    This implies $p^j\beta =0$, because of $a$ being an invertible element.
     Therefore $Z_{\tau_L} = (0,0)$.  By $G_n=\langle \mathcal D_n, s\mathcal L_n, s\mathcal U_n\rangle=\langle \mathcal D_n, \tau_L, \tau_U\rangle$, one can easily deduce that $Z_\sigma=(0,0)$, for every $\sigma\in G_n$. So $[Z]$ is a coboundary and $H^1_\loc(G_n,\E[p^n])=0$.
\end{proof}

\noindent As a consequence of Theorem \ref{thm:conditions} we immediately get the following result.

\begin{corollary}
Let $p\geq 5$ be a prime number and $n$ a positive integer. Let $\E$ be an elliptic curve defined over a number field $k$ not containing $\QQ(\z_p+\bar{\z_p})$.  Under the hypotheses of Theorem \ref{thm:conditions} the local-global divisibility by $p^n$ holds in $\E$ over $k$. \end{corollary}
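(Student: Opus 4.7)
The plan is essentially to collect the pieces already assembled in the paper and invoke the standard cohomological reformulation of Problem \ref{prob:DZ}. First I would recall that by \cite[Proposition 2.1]{Dvornicich2001Local-globalGroups}, the triviality of the first local cohomology group $\Hh^1_{\loc}(G_n,\E[p^n])$ is sufficient for the local-global principle for divisibility by $p^n$ to hold in $\E$ over $k$. Thus it is enough to verify that, under the running hypotheses, this cohomology group vanishes.

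Next, I would run through the chain of reductions carried out in Subsection \ref{sec3} to show that nothing is lost by restricting to the configuration handled in Theorem \ref{thm:conditions}. Explicitly: if $\E$ has no $k$-rational point of exact order $p$, the conclusion follows from \cite[Theorem 1]{Paladino2012OnCurves}; if $G_1$ is not cyclic of order dividing $p-1$ generated by a matrix of the form $\rho_1$, one combines \cite[Lemma 8]{Paladino2012OnCurves} with \cite[Proposition 6]{Paladino2014OnPrinciple}; if $G_2$ is not triangularizable, one uses \cite[Proposition 7]{Paladino2014OnPrinciple}; if $G_n$ is (upper) triangular, Corollary \ref{prop:uptriang} applies; if $m \geq n$, the minimality of $m$ forces every cocycle satisfying the conclusion of Lemma \ref{Q} to be trivial. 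Each of these sub-cases yields $\Hh^1_{\loc}(G_n,\E[p^n])=0$ directly, hence the local-global divisibility by $p^n$.

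In the remaining case, namely $1\leq m<n$, $1\leq j<n$, $1\leq i$, $1\leq h$, with $G_n$ not in upper triangular form, the assumption $i\leq h+|j-m|$ of Theorem \ref{thm:conditions} is in force by hypothesis, so that theorem applies and gives $\Hh^1_{\loc}(G_n,\E[p^n])=0$. Combining this with \cite[Proposition 2.1]{Dvornicich2001Local-globalGroups} completes the argument. There is no real obstacle here: the corollary is a packaging statement, and the only thing to be careful about is to make sure that the enumeration of side cases above is exhaustive, so that the hypothesis of Theorem \ref{thm:conditions} really covers every situation not already settled by the earlier lemmas cited.
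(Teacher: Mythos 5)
Your proposal is correct and matches the paper's intent: the corollary is indeed an immediate consequence of Theorem \ref{thm:conditions} combined with \cite[Proposition 2.1]{Dvornicich2001Local-globalGroups}, which is exactly the route you take (the paper itself offers no separate proof, only the phrase ``we immediately get''). The additional enumeration of side cases you supply is harmless and clarifies the reductions in Subsection \ref{sec3}, but it is not strictly necessary once one reads ``under the hypotheses of Theorem \ref{thm:conditions}'' as including the running assumptions of that subsection.
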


\noindent In the next section we will show that Theorem \ref{thm:conditions} is best possible.
Theorem \ref{thm:conditions} also implies the following statement that refines the criterium given in \cite[Proposition 7]{Paladino2014OnPrinciple}.

\begin{corollary} \label{upper_2}
Let $p\geq 5$ be a prime number and let $n$ be a positive integer.
Let $\E$ be an elliptic curve defined on a number field $k$
not containing $\QQ(\z_{p}+\bar{\z_{p}})$. If $G_1$ is cyclic, generated by
$\begin{pmatrix} 1 & 0\\ 0 & \lambda_1\end{pmatrix}$, with $\lambda_1\in (\ZZ/p\ZZ)^*$, $\rm{ord}(\lambda_1)\geq 3$,
and $G_2$ is in upper triangular form and not diagonal form,  then the local-global
divisibility by $p^n$ holds in $\E$ over $k$, for every positive integer $n$.
\end{corollary}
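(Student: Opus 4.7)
The plan is to deduce the corollary directly from Theorem \ref{thm:conditions} by showing that, under the stated hypotheses, the parameter $i$ attached to the generator $\tau_U$ of $s\mathcal U_n$ must equal $1$; once this is established the inequality $i\leq h+|j-m|$ becomes automatic, since $h\geq 1$ and $|j-m|\geq 0$ by the very definitions. The assumptions on $k$ and on $G_1$ put us exactly in the setup of Subsection \ref{sec3}, so all of the parameters $i,j,m,h$ and generators $\tau_L,\tau_U,\delta$ appearing in Theorem \ref{thm:conditions} are defined as there.

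I would first dispose of the sub-case in which $s\mathcal L_n=\{I\}$. In that event the structure result recalled in Subsection \ref{sec3} gives $G_n=\langle \mathcal D_n,\tau_U\rangle$, so $G_n$ lies in the upper triangular subgroup of $\GL_2(\Z/p^n\Z)$, and Corollary \ref{prop:uptriang} already yields $\Hh^1_{\loc}(G_n,\E[p^n])=0$. From now on I assume $s\mathcal L_n$ is non-trivial, so that $\tau_L$ and $j$ are well defined.

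Since $G_2$ is upper triangular by hypothesis, its strictly lower triangular subgroup $s\mathcal L_2$ is trivial, so the reduction $\bar\tau_L$ of $\tau_L$ modulo $p^2$ is the identity; equivalently, $j\geq 2$. I would then argue that $\bar\tau_U$ cannot also be the identity in $G_2$: otherwise, using $G_n=\langle \mathcal D_n,\tau_L,\tau_U\rangle$ and the surjectivity of the reduction $G_n\twoheadrightarrow G_2$, we would have $G_2=\langle \overline{\mathcal D_n},\bar\tau_L,\bar\tau_U\rangle=\overline{\mathcal D_n}\subseteq \mathcal D_2$, contradicting the assumption that $G_2$ is not in diagonal form. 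Hence $\bar\tau_U\neq I$ in $G_2$, i.e.\ the upper-right entry $p^i$ of $\tau_U$ is non-zero modulo $p^2$, so $i=1$.

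With $i=1$, the inequality $i\leq h+|j-m|$ of Theorem \ref{thm:conditions} holds for free, and that theorem produces $\Hh^1_{\loc}(G_n,\E[p^n])=0$ for every $n\geq 1$. By the discussion in Subsection \ref{sec2} this gives the claimed local-global divisibility by $p^n$ in $\E$ over $k$. The main (and essentially only non-formal) point is the verification that $\bar\tau_U$ cannot be trivial in $G_2$; everything else is a direct consequence of the structure results recalled in Subsection \ref{sec3} and of the two previous results.
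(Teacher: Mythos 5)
Your argument is correct and follows essentially the same route as the paper's own (one-line) proof, which simply records that the hypotheses force $i=1\leq h$ and then invokes Theorem~\ref{thm:conditions}. You have supplied the justification that the paper leaves implicit: the reason $i=1$ (via the surjection $G_n\twoheadrightarrow G_2$, the triviality of $\bar\tau_L$ coming from upper-triangularity of $G_2$, and the non-triviality of $\bar\tau_U$ coming from $G_2$ not being diagonal) and the disposal of the degenerate case $s\mathcal L_n=\{I\}$ through Corollary~\ref{prop:uptriang}.
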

 
\begin{proof} If $G_2$ is in upper triangular form and not diagonal, then $i = 1 \leq h$. The conclusion follows immediately by Theorem \ref{thm:conditions}.
\end{proof}

\noindent Observe that the conclusion of Corollary \ref{upper_2} is obtained after having fixed a basis $\{Q_1, Q_2\}$ of $\E[p^n]$
such that $p^{n-1} Q_1$ is a $k$-rational point. In fact, if $G_2$ is in lower triangular form with respect to such a basis,
then we can have counterexamples as we shall see in the following section.  If we swap the role of $Q_1$ and $Q_2$ and choose
$Q_2$ such that $p^{n-1} Q_2$ is a $k$-rational point, then the conclusion of Corollary \ref{upper_2} holds instead when
$G_2$ is in lower triangular form and we can have counterexamples when it is in upper triangular form as in
\cite[Lemma 10]{Ranieri2018CounterexamplesCurves}.

As mentioned above, in view of
\cite[Proposition 7]{Paladino2014OnPrinciple} and taking into account \cite[Lemma 8]{Paladino2012OnCurves}
and \cite[Proposition 6]{Paladino2014OnPrinciple}, by Corollary  \ref{upper_2}  
we can give the following criterium which reduces further the possible cases when counterexamples may appear.

\begin{corollary} \label{criterium}
Let $p\geq 5$ be a prime number and let $n$ be a positive integer.
Let $\E$ be an elliptic curve defined on a number field $k$
not containing $\QQ(\z_{p}+\bar{\z_{p}})$. If $\Hh^1_\loc(G_n,\E[p^n])\neq 0$ then there exists a basis
of $\E[p^n]$ such that $G_1$ is cyclic, generated by
$\begin{pmatrix} 1 & 0\\ 0 & \lambda_1\end{pmatrix}$, with $\lambda_1\in (\ZZ/p\ZZ)^*$, $\rm{ord}(\lambda_1)\geq 3$,
and $G_2$ is in lower triangular form. 
In particular $\E$ admits a $k$-rational point $Q$ of order $p$ and
a $k$-rational isogeny of degree $p^2$, whose kernel does not contain contains $Q$.
\end{corollary}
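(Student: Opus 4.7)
The plan is to argue by contrapositive: assume $\Hh^1_\loc(G_n,\E[p^n])\neq 0$ and extract the claimed structural properties of $G_1$ and $G_2$. The statement is essentially a synthesis of the previously established vanishing criteria together with the newly proved Corollary \ref{upper_2}.

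First I would invoke \cite[Lemma 8]{Paladino2012OnCurves}: were there no $k$-rational point of exact order $p$ in $\E$, then $\Hh^1_\loc(G_n,\E[p^n])$ would vanish. Hence such a point exists, and I fix a basis $\{Q_1,Q_2\}$ of $\E[p^n]$ as in Subsection \ref{sec3}, with $Q:=p^{n-1}Q_1$ equal to the $k$-rational $p$-torsion point; in this basis every element of $G_1$ has the form $\begin{pmatrix}1&*\\0&\chi_p\end{pmatrix}$. Now \cite[Proposition 6]{Paladino2014OnPrinciple}, used contrapositively, forces $G_1$ to be cyclic, generated by a diagonal matrix $\rho_1=\begin{pmatrix}1&0\\0&\lambda_1\end{pmatrix}$ whose order divides $p-1$. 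The assumption that $k$ does not contain $\QQ(\z_p+\bar{\z_p})$ then yields $\mathrm{ord}(\lambda_1)\geq 3$, exactly as recorded in the setup of this section.

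Next, \cite[Proposition 7]{Paladino2014OnPrinciple} ensures that $G_2$ is in triangular form with respect to this basis. The upper triangular case splits into two subcases, both of which are now excluded: if $G_2$ is diagonal, then $\Hh^1_\loc(G_n,\E[p^n])=0$ by \cite[Proposition 11]{Paladino2014OnPrinciple}; if $G_2$ is upper triangular but not diagonal, then Corollary \ref{upper_2} applies and again gives vanishing. Consequently $G_2$ must be in lower triangular form.

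For the final geometric assertion, $Q=p^{n-1}Q_1$ is already the promised $k$-rational point of order $p$. The lower triangularity of $G_2$, acting on $\E[p^2]$ with basis $\{p^{n-2}Q_1,p^{n-2}Q_2\}$, gives $\sigma(p^{n-2}Q_2)\in\langle p^{n-2}Q_2\rangle$ for every $\sigma$, so this cyclic subgroup of order $p^2$ is Galois-stable and is therefore the kernel of a $k$-rational cyclic isogeny of degree $p^2$. Since $Q$ is a multiple of $Q_1$ and thus independent from $Q_2$, it lies outside this kernel. No step here is conceptually difficult; the only care required is to keep the chosen basis fixed throughout, so that the labels ``upper'' and ``lower'' triangular refer unambiguously to the basis $\{Q_1,Q_2\}$ singled out by the rational $p$-torsion point $Q$.
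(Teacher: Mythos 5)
Your overall plan matches the paper's (implicit) argument: Corollary \ref{criterium} is a synthesis of \cite[Lemma 8]{Paladino2012OnCurves}, \cite[Proposition 6]{Paladino2014OnPrinciple}, \cite[Proposition 7]{Paladino2014OnPrinciple}, and Corollary \ref{upper_2}, applied contrapositively, and your treatment of the geometric consequences (the rational $p$-torsion point $Q$ and the degree-$p^2$ isogeny with kernel $\langle p^{n-2}Q_2\rangle$) is correct.

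There is, however, one genuine error in your middle step. You assert that ``if $G_2$ is diagonal, then $\Hh^1_\loc(G_n,\E[p^n])=0$ by \cite[Proposition 11]{Paladino2014OnPrinciple}.'' That is not what Proposition 11 says: it concerns the group $\mathcal D_n$ of diagonal matrices inside $G_n$, i.e.\ it covers the case where the whole $G_n$ is diagonal, not the case where only the reduction $G_2$ modulo $p^2$ is diagonal. Indeed this claim cannot be true, because the paper's own Remark \ref{rem_criterium} exhibits, for $n=3$, groups $G_3$ with $G_2$ diagonal and $\Hh^1_\loc(G_3,\E[p^3])\neq 0$. So the diagonal subcase is not excluded. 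Fortunately, this does not derail the proof: a diagonal $G_2$ is in particular lower triangular, so it already sits inside the conclusion you are trying to prove. The correct accounting is simply: by \cite[Proposition 7]{Paladino2014OnPrinciple}, $G_2$ is triangular; by Corollary \ref{upper_2}, it cannot be upper triangular and non-diagonal; hence it is lower triangular (possibly diagonal). You should remove the appeal to Proposition 11 and this erroneous exclusion, replacing it with the observation that the diagonal case is subsumed by the lower-triangular conclusion and does not need to be ruled out.

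One smaller citation slip: for the existence of a $k$-rational point of exact order $p$, the paper invokes \cite[Theorem 1]{Paladino2012OnCurves}, not \cite[Lemma 8]{Paladino2012OnCurves}; Lemma 8, together with \cite[Proposition 6]{Paladino2014OnPrinciple}, is what forces $G_1$ to be cyclic generated by a diagonal matrix of the stated form. The logic you carry out is right, but the attributions should be aligned accordingly.
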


\noindent In the proof of Theorem \ref{thm:counterexamples} we will show that 
the hypotheses of Corollary \ref{criterium} cannot be improved further, since
there exist counterexamples where all the matrices in $G_2$ are in diagonal form (see Remark \ref{rem_criterium}).

\bigskip
The proof of Theorem \ref{thm:conditions} implies that under the same hypotheses we have the vanishing
of $\Sha(k,\E[p^n])$ and then an affirmative answer to Problem \ref{prob2}, as recalled in Section \ref{sec2}.

\begin{corollary} \label{cor:prob2}
With the definitions of $1 \leq i$, $1\leq j<n$, $1\leq m<n$ and $1 \leq h$  as above, if $i \leq h+ |j-m|$ then $\Sha(k, \mathcal E[p^n]) = 0$ and the local-global divisibility by $p^n$
holds in $H^t(k,\E[p^n])$, for every positive integer $t$. 
\end{corollary}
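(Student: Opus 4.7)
The plan is to derive this corollary as a direct consequence of Theorem \ref{thm:conditions} together with the cohomological machinery already recalled in Section \ref{sec2}. Under the hypothesis $i \leq h + |j-m|$, Theorem \ref{thm:conditions} provides $\Hh^1_{\loc}(G_n, \E[p^n]) = 0$, so the task reduces to propagating this vanishing first to $\Sha(k, \E[p^n])$ and then to the cohomological divisibility problem (Problem \ref{prob2}).

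For the first assertion, I would invoke the isomorphism $\Hh^1_{\loc}(G_n, \E[p^n]) \cong \Sha_{\Sigma}(k, \E[p^n])$ from \cite[Lemma 3.3]{Creutz2012AVarieties} (cf.\ \cite[Proposition 4.1]{Dvornicich2022Local-globalGroups}), stated in Section \ref{sec2}. This immediately gives $\Sha_{\Sigma}(k, \E[p^n]) = 0$. Since $\Sigma$ differs from $M_k$ by at most finitely many places, the definitions yield an inclusion $\Sha(k, \E[p^n]) \hookrightarrow \Sha_{\Sigma}(k, \E[p^n])$ (every class unramified at all of $M_k$ is in particular unramified at all of $\Sigma$), whence $\Sha(k, \E[p^n]) = 0$.

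For the second assertion, I would apply \cite[Theorem 2.1]{Creutz2016OnCurves}, which asserts that the vanishing of $\Sha(k, \E[p^n])$ is equivalent to an affirmative answer to Problem \ref{prob2} for the Galois module $\E[p^n]$ and for every positive integer $t$. Combining this with the conclusion of the previous step, we obtain that the local-global divisibility by $p^n$ holds in $\Hh^t(k, \E[p^n])$ for every $t \geq 1$.

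The main obstacle here is essentially nil: the result is an assembly of Theorem \ref{thm:conditions} and cited cohomological facts, and the only routine verification is the inclusion $\Sha(k, \E[p^n]) \subseteq \Sha_{\Sigma}(k, \E[p^n])$ needed to pass from the $\Sigma$-restricted to the full Tate--Shafarevich group. No new computation beyond a clean reference to the existing isomorphisms and to \cite[Theorem 2.1]{Creutz2016OnCurves} is required.
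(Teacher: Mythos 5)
Your proposal matches the paper's intended proof: the corollary is obtained by combining Theorem \ref{thm:conditions} with the isomorphism $\Hh^1_{\loc}(G_n,\E[p^n])\cong \Sha_\Sigma(k,\E[p^n])$, the inclusion $\Sha(k,\E[p^n])\subseteq \Sha_\Sigma(k,\E[p^n])$ (since $\Sigma\subseteq M_k$, an intersection over more places is contained in the intersection over fewer), and \cite[Theorem 2.1]{Creutz2016OnCurves}, exactly as recalled in Section \ref{sec2}. One small terminological caveat: the groups $\Sha$ and $\Sha_\Sigma$ here are defined via local \emph{triviality} (kernel of the restriction maps), not unramifiedness, so the parenthetical ``unramified'' should read ``locally trivial''; also, the paper only needs that $\Sha(k,\E[p^n])=0$ \emph{implies} an affirmative answer to Problem \ref{prob2}, not the full equivalence you assert.
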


\section{Counterexamples} \label{sec4}

\noindent This section is devoted to the proof that the conditions given in Theorem \ref{thm:conditions} are
necessary. We produce counterexamples in the cases when they are not satisfied. By Theorem \ref{thm:conditions},
possible counterexamples may appear only in these situations:
\[ \begin{cases}
    j < m\\
    i > h + m -j,
\end{cases}
\quad
\text{ or }
\quad 
\begin{cases}
    j \geq m\\
   i > h + j -m,
\end{cases}
\]
 with $1 \leq i$, $1\leq j<n$, $1\leq m<n$ and $1 \leq h$. 
 
\begin{remark} \label{raising}
 As mentioned in the Introduction, it is known that for powers $p^n$, with $p\in \{2,3\}$ and $n\geq 2$, there exist counterexamples over $\QQ$ \cite{Paladino2012OnGroups, Creutz2016OnCurves}. 
 Moreover, for powers $3^n$, with $n\geq 2$, there exist counterexamples over $\QQ(\z_3)$ \cite{Paladino2010}.  
 All of  these give also counterexamples in every
 extension $L$ of $k$, linearly disjoint from $k(\E[p^n])$,  where $k=\QQ$ or respectively $\QQ(\E[3])$, because of $H^1_\loc(k(\E[p^n])/k)\simeq H^1_\loc(L(\E[p^n])/L)$.
 Therefore we search for counterexamples for $p\geq 5$. 
 \end{remark}

\begin{theorem}\label{thm:counterexamples}
Let $1 \leq i$, $1\leq j<n$, $1\leq m<n$ and $1 \leq h$ be defined as in Section \ref{sec3}.
For every prime number $p \geq 5$ and every positive integer  $n \geq 2$ both the following hold

\begin{enumerate}
 \item[1)] there exist groups $G_n=\langle \tau_L, \tau_U, \rho, \delta\rangle$ as above, such that $j< m$, $i> h+ m-j$ and $\Hh^1_\loc(G_n,(\ZZ/p^n\ZZ)^2)\neq 0$;
\item[2)] there exist groups $G_n=\langle \tau_L, \tau_U, \rho, \delta\rangle$ as above, such that $j\geq m$, $i> h+ j-m$ and $\Hh^1_\loc(G_n,(\ZZ/p^n\ZZ)^2)\neq 0$.
\end{enumerate}

\noindent Moreover for every $G_n$ as in $\mathrm{1)}$ and in $\mathrm{2)}$ there exists an elliptic curve
$\E$ defined over a number field $k$ such that $\Gal(k(\E[p^n])/k) \simeq G_n$
and $\Hh^1_\loc(G_n,\E[p^n])\neq 0$.
\end{theorem}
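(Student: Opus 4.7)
The plan is to separate the theorem into two tasks: first, construct explicit cocycles on abstract matrix groups $G_n \subseteq \GL_2(\ZZ/p^n\ZZ)$ to establish items 1) and 2); second, realize these $G_n$ as Galois groups $\Gal(k(\E[p^n])/k)$ of elliptic curves over number fields.

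For item 1), fix parameters $(i,j,m,h)$ with $1 \leq j < m < n$, $1 \leq h$, and $i > h + m - j$, and let $G_n \subset \GL_2(\ZZ/p^n\ZZ)$ be the subgroup generated by $\rho_n, \tau_L, \tau_U, \delta$ in the shapes prescribed in Section~\ref{sec3}. Define a cocycle $Z$ on generators by setting $Z_{\rho_n} = Z_{\tau_L} = Z_{\tau_U} = (0,0)$ and $Z_\delta = (p^m, 0)$, extending by the cocycle identity. Item 2) is handled symmetrically, with $Z_{\rho_n} = Z_{\tau_U} = Z_\delta = (0,0)$ and $Z_{\tau_L} = (0, p^j)$. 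Consistency of the extension amounts to verifying the cocycle identity on the defining relations of $G_n$; the substantive checks come from the commutators of $\delta$ with $\tau_L$ and $\tau_U$, which produce torsion terms of orders $p^{m+j}, p^{m+i}, p^{i+j}$, all vanishing mod $p^n$ under the parameter constraints.

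The cohomological core consists of two verifications. For local solvability, decompose $\sigma \in G_n$ as $\sigma = \delta^a \rho_n^k \tau_L^c \tau_U^b$ via \cite[Proposition~12]{Paladino2012OnCurves}, compute $Z_\sigma$ from the cocycle relation, and exhibit $W_\sigma \in (\ZZ/p^n\ZZ)^2$ with $Z_\sigma = (\sigma-1)W_\sigma$. The linear systems that arise coincide with those in the proof of Theorem~\ref{thm:conditions}: the forcing of $p^m \beta = 0$ (resp.\ $p^j\beta = 0$) there hinged on solving an equation of the form $p^i b \equiv p^{h + |j-m|} u \pmod{p^n}$ for some unit $u$, and under the present hypothesis $i > h + |j - m|$ this equation is unsolvable, so the local systems admit solutions $(x,y)$ without trivializing $Z$. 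For non-triviality in $\Hh^1$, suppose $Z_\sigma = (\sigma-1)W$ for a single $W = (w_1, w_2)$: then $Z_{\rho_n} = 0$ forces $w_2 = 0$ (since $\lambda_n - 1$ is a unit in $\ZZ/p^n\ZZ$), and $Z_{\tau_L} = 0$ forces $w_1 = p^{n-j} w_1'$; then $Z_\delta = (p^{m+n-j} w_1', 0) = (p^m, 0)$ would require $p^{m+n-j} w_1' \equiv p^m \pmod{p^n}$, impossible since $j < m$. Item 2) is symmetric.

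The realization step is the main obstacle. Following Ranieri's strategy in \cite{Ranieri2018CounterexamplesCurves} for $n=2$, I would start from a non-CM elliptic curve $\E$ over $\QQ$ admitting a rational $p$-isogeny and whose mod-$p^n$ Galois image contains a subgroup isomorphic to the abstract $G_n$ (generic existence being guaranteed by Serre's open image theorem), and take $k$ to be the fixed field inside $\QQ(\E[p^n])$ of that subgroup. One has to verify $k \not\supset \QQ(\z_p + \bar{\z_p})$, compatible with the requirement $\mathrm{ord}(\lambda_1) \geq 3$ for $p \geq 5$. Ensuring realizability for \emph{every} admissible parameter tuple $(i,j,m,h)$ is the delicate point: it likely requires either parameter-adapted families of curves (generalizing Ranieri's $n=2$ case) or an inductive lifting of $G_2$-realizations up the $p$-adic tower via isogeny-class considerations.
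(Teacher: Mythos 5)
Your high-level plan (explicit cocycle, local solvability check, non-triviality check, Galois realization) matches the structure of the paper's proof, but there are substantial gaps in the middle two steps and in the realization.

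First, the cocycle is not automatically well defined for an arbitrary parameter tuple $(i,j,m,h)$ in the failure region. You assert that the commutator terms vanish modulo $p^n$ ``under the parameter constraints'', but the conditions $j<m<n$, $i>h+m-j$ do not by themselves force $i+j\ge n$, $i+m\ge n$, $m+j\ge n$, etc. The paper does not try to handle arbitrary parameters: since the theorem only asserts \emph{existence} of one group with those properties, it makes concrete choices ($j=n-2$, $m=n-1$, $h=\lceil (n-1)/2\rceil$ or $\lceil n/2\rceil$, $i>h+1$, and analogous choices for $j\ge m$) precisely so that $\langle\tau_L,\tau_U,\delta\rangle$ is abelian and normal in $G_n$, the binomial expansions stabilize modulo $p^n$, and the explicit formula $Z_\sigma=(ap^m,0)$ (or $(0,cp^2)$ in one $n=3$ subcase) can be checked to be a cocycle by direct multiplication. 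Your proposal skips the concrete choice of parameters and therefore never actually establishes well-definedness.

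Second, your argument for local solvability is a non sequitur. You argue that because the equation $p^i b\equiv p^{h+|j-m|}u\pmod{p^n}$ used in Theorem \ref{thm:conditions} is unsolvable when $i>h+|j-m|$, ``the local systems admit solutions.'' That only shows that \emph{one particular} strategy for forcing triviality fails; it does not exhibit, for each $\sigma\in G_n$, a point $W_\sigma$ with $Z_\sigma=(\sigma-1)W_\sigma$. The paper carries out this verification explicitly, writing $\sigma=\delta^a\tau_L^c\tau_U^b\rho^\gamma$, computing the $2\times 2$ system, and splitting into cases according to whether $\lambda^\gamma(1+p^h)^a-1$ is a unit, divisible by $p^h$, by $p^{h+1}$, etc.; in each case a concrete $(x,y)$ is produced, which requires using the specific shape of $\lambda=\alpha+p^{h+s}\theta$ and the inequality $n-h-1\ge 0$ (resp.\ $n-h-2\ge 0$). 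None of this is recoverable from your high-level reasoning, and the case split is genuinely delicate (e.g.\ for $j\ge m$, $n\ge 4$ one must separately treat $p^2\nmid a$, $p\nmid a$, and $p\mid a$ with $m=n-2$).

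Third, the boundary cases $n=2$ and $n=3$ need separate treatment that your proposal omits entirely. For $j<m$ with $n=2$ the configuration is vacuous (so nothing to prove); for $j\ge m$ with $n=2$ the paper falls back on Ranieri's example; and for $n=3$, $j\ge m$, the groups used have no $\tau_U$ (i.e.\ $i=n$) and the cocycle in the subcase $j>m$ is $Z_\sigma=(0,cp^2)$, supported on the lower-triangular generator rather than $\delta$. Your uniform template $Z_{\tau_L}=(0,p^j)$, $Z_\delta=0$ does not cover all of these.

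Finally, the realization step: the paper does not go through Serre's open image theorem or inductive lifting, but instead invokes \cite[Lemma 11]{Gillibert2017OnVarieties}, which directly produces a number field $k$ and a non-CM elliptic curve $\E/k$ with $\Gal(k(\E[p^n])/k)$ isomorphic to any prescribed subgroup of $\GL_2(\ZZ/p^n\ZZ)$ of the relevant shape. Your alternative route would require additional work (in particular the verification that the fixed field of $G_n$ inside a full $p^n$-division field avoids $\QQ(\z_p+\bar{\z_p})$, and that the needed subgroup actually sits inside the image), which the cited lemma avoids; you should be aware that this is exactly the kind of black-box result the paper relies on.
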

\bigskip
We divide the proof of Theorem \ref{thm:counterexamples} in two parts: the case when $j<m$ and the case when
$j\geq m$.

\begin{proof}[Proof of Theorem \ref{thm:counterexamples} for $j<m$.]

Let $n=2$. 
Recall  we are assuming that $G_n=G_2$ is in lower triangular form and not in diagonal form. 
  Then for $n=2$ the case when $j<m$ does not hold under our assumptions that $j>0$ and $m<n$.

Hence we can suppose $n \geq 3$. \normalcolor
We consider a group $G_n$ generated by the following automorphisms:
$$\tau_L=\begin{pmatrix}
1 & 0\\
p^{n-2} & 1\\
\end{pmatrix},
\quad
\tau_U=\begin{pmatrix}
1 & p^{i}\\
0 & 1\\
\end{pmatrix},
\quad
\delta=\begin{pmatrix}
1 +p^{n-1}& 0\\
0 & 1+p^{h}\\
\end{pmatrix},
\quad
\rho=\begin{pmatrix}
1 & 0\\
0 & \lambda\\
\end{pmatrix},$$
with $\lambda=\alpha+p^{h+1}\theta$, for some $\alpha \in (\Z/p\Z)^{*}$ with $\textrm{ord}(\alpha)\geq 3$, $\theta \in \Z/p^n\Z$ and $i>h+1$. We have $m=n-1$ and $j=n-2$. Moreover we set 
\[h = \begin{cases} \dfrac{n}{2} & \text{if }n \text{ even}\\ \dfrac{n-1}{2} & \text{if } n \text{ odd}.\end{cases}\]
Notice that the assumption $i>h+1$ implies $i>h+m-j$. Observe that $\tau_U$ and $\tau_L$ commute since $i+j\geq n$ and that $\tau_U$ and $\delta$ commute because of $i+m\geq n$ and $i+h\geq n$. One can easily verify that $\langle \tau_L, \tau_U \rangle$ is normal in $G_n=\langle \tau_L, \tau_U, \delta, \rho \rangle$. Moreover $\langle \tau_L, \tau_U, \delta \rangle$ is also normal in $G_n$, since it is the kernel of the reduction  modulo $p$ from $G_n$ to $\GL_2(\Z/p\Z)$.
Therefore we have the following chain of normal subgroups
\[\la \tau_L\ra \trianglelefteq \la \tau_L,\tau_U \ra \trianglelefteq \la \tau_L, \tau_U, \delta \ra \trianglelefteq \la \tau_L, \tau_U, \delta, \rho \ra = G_n. \]
Thus every matrix $\sigma\in G_n$ can be written as a product $\delta^a\tau_L^c\tau_U^b\rho^\gamma$, for some integers $a,b,c,\gamma$. Observe that 
$(1+p^{n-1})^a\equiv 1+ap^{n-1} \mod p^n$ and $(1+p^{h})^a\equiv 1+ap^h+\binom{a}{2}p^{2h}\mod p^n$, because of our choices of $m=n-1$ and $h\geq (n-1)/2$ (in particular we have $2(n-1)\geq n$ and $3h\geq n$). Then
$$\sigma\equiv\begin{pmatrix}
(1+p^{n-1})^a & \lambda^\gamma b p^i\\
cp^{n-2}(1+p^h)^a & \lambda^\gamma(1+p^{h})^a\\
\end{pmatrix} \equiv 
\begin{pmatrix}
1+ap^{n-1} & \lambda^\gamma b p^i\\
cp^{n-2}(1+ap^h) & \lambda^\gamma\left(1+ap^{h} + \binom{a}{2}p^{2h}\right)\\
\end{pmatrix} \modn p^n.$$
Let $Z=\{Z\}_{\sigma\in G_n}$ be defined by $Z_\sigma=\begin{pmatrix} ap^{n-1}\\ 0\end{pmatrix}$. 
We are going to verify that $Z$ is a cocycle of $G_n$ with values in $(\Z/p^n\Z)^2$. In the following we will also denote the matrix $\delta^a\tau_L^c\tau_U^b\rho^\gamma$ by $\sigma(a,b,c,\gamma)$.
Given $\sigma_1 = \sigma(a_1,b_1,c_1,\gamma_1) = \delta^{a_1}\tau_L^{c_1}\tau_U^{b_1}\rho^{\gamma_1}$ and $\sigma_2 = \sigma(a_2,b_2,c_2,\gamma_2) = \delta^{a_2}\tau_L^{c_2}\tau_U^{b_2}\rho^{\gamma_2}$, we look at the product
$\sigma_1\sigma_2$. We have $(1+a_1p^{n-1})a_2p^{n-1} \equiv a_2p^{n-1} \mod p^n$ and, for our choices of $h$ and $i$, we also have $c_1p^{n-2}(1+a_1p^h)a_2p^i \equiv 0 \mod p^n$. Then
\[  \sigma_1\sigma_2 \equiv \begin{pmatrix}
    1 + (a_1+a_2)p^{n-1} & (b_2 + b_1\lambda^{\gamma_1})\lambda^{\gamma_2}p^i\\
    (c_1 + \lambda^{\gamma_1}c_2(1+a_2p^h))(1+a_1p^h)p^{n-2} & \lambda^{\gamma_1 + \gamma_2}(1+p^h)^{a_1+a_2}
\end{pmatrix} \mod p^n, \]

\noindent i.e. 

\[ \begin{split} \sigma_1\sigma_2 & \equiv   
\begin{pmatrix}
    1 + (a_1+a_2)p^{n-1} & (b_1+\lambda^{-\gamma_2}b_2)\lambda^{\gamma_1+\gamma_2}p^i\\
    (c_1(1+p^h)^{-a_2}+\lambda^{\gamma_1}c_2)(1+(a_1+a_2)p^h)p^{n-2} & \lambda^{\gamma_1 + \gamma_2}(1+p^h)^{a_1+a_2}
\end{pmatrix} \mod p^n. \\
\end{split}\]

\noindent Observe that $ (1+p^h)^{a_1+a_2}p^{n-2} \equiv (1+(a_1+a_2)p^h)p^{n-2}\mod p^n$, hence

$$\sigma_1\sigma_2=\sigma(a_1+a_2, b_1 + \lambda^{-\gamma_2}b_2, c_1(1+p^h)^{-a_2}+\lambda^{\gamma_1}c_2, \gamma_1+\gamma_2)$$ 

\noindent and 
$$ Z_{\sigma_1\sigma_2} = \begin{pmatrix} (a_1+a_2)p^{n-1} \\0 \end{pmatrix}.$$

\noindent On the other hand

$$Z_{\sigma_1} + \sigma_1 Z_{\sigma_2} = \begin{pmatrix} a_1 p^{n-1}\\ 0 \end{pmatrix} +
\begin{pmatrix}
(1+p^{n-1})^{a_1} & \lambda^\gamma b_1 p^i\\
c_1p^{n-2}(1+p^h)^a & \lambda^\gamma(1+p^{h})^{a_1}\\
\end{pmatrix}
\begin{pmatrix} a_2 p^{n-1}\\ 0 \end{pmatrix}
=\begin{pmatrix} (a_1+a_2) p^{n-1}\\ 0 \end{pmatrix}$$

\medskip
\noindent and thus $Z$ defines a cocycle of $G_n$ with values in $(\Z/p^n\Z)^2$. 
The class of $Z$ in $\Hh^1(G_n, (\Z/p^n\Z)^2)$ belongs to $\Hh^1_\loc(G_n,(\ZZ/p^n\ZZ)^2)$ if and only if it satisfies the local conditions, i.e.\ if and only if the following system has a solution $(x,y)\in (\ZZ/p^n\ZZ)^2$, for all integers $a,b,c,\gamma$:
\[\begin{cases}
    ap^{n-1} x + \lambda^\gamma b p^i y \equiv a p^{n-1} \mod p^n\\
    c p^{n-2}(1+ap^h)x + (\lambda^\gamma(1 +p^h)^a-1)y \equiv 0 \mod p^n.
\end{cases}\]
If $p\mid a$, then $ap^{n-1}=0$ and $(x,y)=(0,0)$ is a solution.
Hence we assume $p\nmid a$.
If $p\nmid \lambda^\gamma(1 +p^h)^a-1$, then
\[
\begin{cases}
    x = 1\\
    y = -c(1+ap^h)(\lambda^\gamma(1 +p^h)^a-1)^{-1} p^{n-2}
\end{cases}
\]
is a solution of the system.
Suppose $p\mid \lambda^\gamma(1 +p^h)^a-1$. Observe that in particular $\lambda^\gamma \equiv 1 \mod p$.
Moreover, we have that $\lambda^\gamma=(\alpha+p^{h+1}\theta)^\gamma=\alpha^\gamma+\sum_{t=1}^\gamma \binom{\gamma}{t}\alpha^{\gamma-t}p^{t(h+1)}\theta^{t(h+1)}=1+p^{h+1}\omega$, for some $\omega\in \ZZ/p^n\ZZ$.

Thus $\lambda^\gamma(1 +p^h)^a-1=\lambda^\gamma-1 +\lambda^\gamma\left(ap^h + \binom{a}{2}p^{2h}\right)= p^h\left(a+p\omega + \binom{a}{2}p^h\right).$ Notice that $a+p\omega + \binom{a}{2}p^h$ is invertible, because of our assumption $p\nmid a$. A solution is then 
\[
\begin{cases}
    x  = 1\\
    y  = -c(1+ap^h)(a+p\omega + \binom{a}{2}p^h)^{-1}p^{n-2-h}
\end{cases}
\]
(recall that $n\geq 3$ and $n-2-h\geq 0$, by our choice of $h$). The cohomology class $[Z]$ is not a coboundary, since the solution of the system depends on $a$, $c$ and $\gamma$.
One can verify this directly: for $\sigma=\delta$ one of the equations of the system is $p^{n-1}x\equiv p^{n-1} \mod p^n$, whose solutions are $x\equiv 1 \mod p$; but for $\tau_L$ we get the equation $p^{n-2}x\equiv 0 \mod p^n$, whose solutions are instead $x\equiv 0 \mod p^2$. 
\par By  \cite[Lemma 11]{Gillibert2017OnVarieties}, given $n \geq 3$ a positive integer and $p \geq 5$ a prime number, there exists a number field $k$ and an elliptic curve $\mathcal E$ over $k$ such that $\Gal\left( k\left(\mathcal{E}[p^n]\right)/k \right)$ is isomorphic to $G_n$ 
defined above. Then in particular $\Hh^1_\loc (G_n,\E[p^n])\neq 0$. 
\end{proof}

\normalcolor
\begin{remark}
Observe that these examples work even if $i=n$ and $\tau_U$ is the identity matrix. In this last
case the group $G_n$ giving the counterexample is in lower triangular form (again having fixed a basis
$\{Q_1,Q_2\}$ of $\E[p^n]$, with $p^{n-1}Q_1$ a $k$-rational point, from the beginning). In particular, this
happens  when $n=4$, where the condition $i> h+m-j = 2+3-2 = 3$ implies $i\geq 4$ and when $n=3$,
where the condition $i>h+m-j=1+2-1=2$ implies $i\geq 3$. 
\end{remark}

\begin{remark}\label{rmk:lambda}
  One can produce other counterexamples by choosing $\lambda=\alpha+p^{h+s}\theta$, with $1\leq s < n/2$, when $n$ is even, and $1\leq s < (n+1)/2$, when $n$ is odd. The same argument in the above proof of Theorem \ref{thm:counterexamples} for $j<m$ work with these other choices of $\lambda$ as well.
\end{remark}

\begin{proof}[Proof of Theorem \ref{thm:counterexamples} in the case when $j\geq m$]

We assume first that $n\geq 4$.
We consider a group $G_n$ generated by the following automorphisms:
\[\tau_L=\begin{pmatrix}
1 & 0\\
p^{n-1} & 1\\
\end{pmatrix},
\quad
\tau_U=\begin{pmatrix}
1 & p^i\\
0 & 1\\
\end{pmatrix},
\quad
\delta=\begin{pmatrix}
1 +p^{m}& 0\\
0 & 1+p^h\\
\end{pmatrix}
\quad
\rho=\begin{pmatrix}
1 & 0\\
0 & \lambda\\
\end{pmatrix},\]
where $\lambda=\alpha+p^{h+2}\theta$, for some $\alpha\in (\ZZ/p\ZZ)^*$ such that $\textrm{ord}(\alpha)\geq 3$, $\theta \in \Z/p^n\Z$, and $i>h+1$.
We are going to show that $\Hh^1_\loc(G_n,(\ZZ/p^n\ZZ)^2)\neq 0$ for such a group $G_n$ with $m=n-1$, for the case when $j=m$, and
with $m=n-2$, for the case when $j>m$. Therefore, from now on we set $m$ in this way and we give a unique proof for both
 these cases. Moreover, we set 
\[h = \begin{cases} \dfrac{n}{2} & \text{if }n \text{ even}\\ \dfrac{n+1}{2} & \text{if }n \text{ odd}.\end{cases}\]
We have that $i$ satisfies $i>h+j-m$, and that $\tau_U$, $\tau_L$ and $\delta$ commute, since $i+j\geq n$, $m+j\geq n$, $j+h\geq n$, $i+m \geq n$ and $i+h\geq n$. As in the previous case, the subgroup $\langle \tau_U, \tau_L, \delta \rangle$ is normal in $G_n$. Thus every matrix $\sigma\in G_n$ can be written as a product $\delta^a\tau_L^c\tau_U^b\rho^\gamma$ for some integers $a,b,c,\gamma$. By $2h\geq n$ and $2m\geq n$, we have
\[\sigma=\begin{pmatrix}
1+ap^{m} & \lambda^\gamma b p^i\\
cp^{n-1} & \lambda^\gamma(1+ap^{h})\\
\end{pmatrix}.\]
Let $Z=\{Z\}_{\sigma\in G_n}$ be defined by $Z_\sigma=\begin{pmatrix} ap^{m}\\ 0\end{pmatrix}$.
We are going to show that $Z$ is a cocycle of $G_n$ with values in $(\Z/p^n\Z)^2$.
Given $\sigma_1 =  \delta^{a_1}\tau_L^{c_1}\tau_U^{b_1}\rho^{\gamma_1}$ and $\sigma_2 = \delta^{a_2}\tau_L^{c_2}\tau_U^{b_2}\rho^{\gamma_2}$, 

we have
\[  \sigma_1\sigma_2 = \begin{pmatrix}
    1 + (a_1+a_2)p^{m} & (b_2 + b_1\lambda^{\gamma_1})\lambda^{\gamma_2}p^i\\
    (c_1 + \lambda^{\gamma_1}c_2)p^{n-1} & \lambda^{\gamma_1 + \gamma_2}\left(1+(a_1+a_2)p^h\right)
\end{pmatrix}. \]
Then $Z_{\sigma_1\sigma_2} = ((a_1+a_2)p^{m},0)$ and, by $(1+a_1p^{m})a_2p^{m} \equiv a_2p^{m} \mod p^n$ and $c_1p^{n-1}a_2p^{m} \equiv 0 \mod p^n$, we get
\[Z_{\sigma_1} + \sigma_1 Z_{\sigma_2} = \begin{pmatrix} a_1 p^{m}\\ 0 \end{pmatrix} + \begin{pmatrix} \left(1+a_1p^{m}\right)a_2p^{m} \\ c_1p^{n-1}a_2p^{m} \end{pmatrix}
= \begin{pmatrix} \left(a_1+a_2\right)p^{m} \\ 0 \end{pmatrix} =  Z_{\sigma_1\sigma_2} .\]
Therefore $Z$ represents a class of a cocycle in $\Hh^1(G_n, (\Z/p^n\Z)^2)$. To have that $[Z]$ actually lies in $\Hh_{\loc}^1(G_n, (\Z/p^n\Z)^2)$, we need to check that $Z$ satisfies the local conditions.
This holds if and only if the following system has a solution $(x,y)\in (\ZZ/p^n\ZZ)^2$, for all integers $a,b,c,\gamma$:
\[\begin{cases}
    ap^{m} x + \lambda^\gamma b p^i y \equiv a p^{m} \mod p^n\\
    c p^{n-1} x + (\lambda^\gamma-1 +a\lambda^\gamma p^{h}) y \equiv 0 \mod p^n.
\end{cases}\]
If $p^2\mid a$, then $ap^{m}=0$ and a solution is $(x,y)=(0,0)$. Hence we can assume $p^2\nmid a$.
If $\lambda^\gamma-1 +a\lambda^\gamma p^h$ is invertible, then a solution is 
\begin{equation}
\begin{cases}
    x = 1\\
    y = -c(\lambda^\gamma-1 +a\lambda^\gamma p^h)^{-1} p^{n-1}.
\end{cases}
\end{equation}
We now assume that $p\mid \lambda^\gamma-1 +a\lambda^\gamma p^h$. As in the case when $j<m$, we have that $\lambda^\gamma \equiv 1 \mod p$ and  
$\lambda^{\gamma} = 1 + p^{h+2}\omega$, for some $\omega \in \Z/p^n\Z$. 
Thus $\lambda^\gamma-1 +a\lambda^\gamma p^h=p^h(a+p^2\omega)$.
If $p\nmid a$, then $a+p^{2}\omega$ is invertible and a solution is 
$$\begin{cases}
    x = 1\\
    y = -c p^{n-h-1}(a+p^{2}\omega)^{-1}
\end{cases}$$ 
\noindent (recall that $n \geq 4$, so $n-h-1 \geq 0$). If $p \mid a$ and $m = j = n-1$, then we are again in the case when $ap^m = 0$ and a solution is $(x,y)= (0,0)$.
Thus suppose that $p \mid a$ and $m = n-2$.
Since we are assuming that $p^2 \nmid a$, we can write $a=p\eta$, with $\eta\in (\ZZ/p\ZZ)^*$. Thus 
\[\lambda^\gamma-1 +a\lambda^\gamma p^h=p^{h+2}\omega +\eta p^{h+1}(1+p^{h+2}\omega) \equiv p^{h+1}(\eta+p\omega) \mod p^n,\]
with $\eta + p\omega$ invertible. A solution is then 
$$\begin{cases}
    x=1\\
    y = -c p^{n-h-2}(\eta+p\omega)^{-1}
\end{cases}$$

\noindent (again, we are assuming $n \geq 4$, so $n-h-2\geq 0$). It remains to show that this cocycle is not a coboundary. This is immediate, since the solution of the system depends on the integers $a$, $c$ and $\gamma$. However, one can verify this directly: for $\sigma=\delta$ one of the equations of the system given by the local conditions is $p^{m}x\equiv p^{m} \mod p^n$, whose solutions are $x\equiv 1 \mod p^2$ if $m = n-2$ or $x \equiv 1 \mod p^{n-1}$ if $m = j = n-1$. On the other hand, for $\tau_L$ we get the equation $p^{n-1}x\equiv 0 \mod p^n$, whose solutions are instead $x\equiv 0 \mod p$.

\bigskip  We now study the case where $n=3$. 
We assume first that $m=j$. Consider a group $G_3$ generated by the following automorphisms:
\[\tau_L=\begin{pmatrix}
1 & 0\\
p^{2} & 1\\
\end{pmatrix},
\quad
\delta=\begin{pmatrix}
1 +p^2& 0\\
0 & 1+p\\
\end{pmatrix},
\quad
\rho=\begin{pmatrix}
1 & 0\\
0 & \lambda\\
\end{pmatrix},\]
where $\lambda=\alpha+p^2\theta$, for some $\alpha\in (\ZZ/p\ZZ)^*$, with $\textrm{ord}(\alpha)\geq 3$ and $\theta \in \Z/p^3\Z$.
 Recall that we assumed that $i$ is a positive integer. Then here we are setting $i=n$, which satisfies $i>h+j-m$, as required. 
We are going to show that $\Hh^1_\loc(G_3,\E[p^3])\neq 0$, for such a group $G_3$. One can easily verify that
$$\la \tau_L\ra \trianglelefteq \la \tau_L, \delta \ra \trianglelefteq \la \tau_L, \delta, \rho\ra = G_3 $$
(observe also that $\la \tau_L, \delta \ra$ is an abelian group in this case).
Thus every matrix $\sigma\in G_3$ can be written as a product 
$\delta^a\tau_L^c\rho^\gamma$ for some integers $a,c,\gamma$, i.e.\
\[\sigma=\delta^a\tau_L^c\rho^\gamma=\begin{pmatrix}
1+ap^2 & 0\\
cp^2 & \lambda^\gamma(1+ap + \binom{a}{2}p^2)\\
\end{pmatrix},\]

\noindent Let $Z=\{Z_\sigma\}_{\sigma\in G_3}$, with $Z_\sigma=\begin{pmatrix} (1+p^2)^a-1 \\ 0\end{pmatrix} = \begin{pmatrix} ap^2 \\ 0\end{pmatrix} $.
We are going to verify that this defines a cocycle of $G_3$ with values in $\Z/p^3\Z$.
Given $\sigma_1 = \delta^{a_1}\tau_L^{c_1}\rho^{\gamma_1}$ and $\sigma_2 = 
\delta^{a_2}\tau_L^{c_2}\rho^{\gamma_2}$,  we have
\[  \sigma_1\sigma_2 = \begin{pmatrix}
    (1+p^2)^{a_1+a_2} & 0\\
    (c_1 + \lambda^{\gamma_1}c_2)p^2 & \lambda^{\gamma_1 + \gamma_2}\left(1+p\right)^{a_1+a_2}
\end{pmatrix}. \]
The image of $Z$ on $\sigma_1\sigma_2$ is 
$$Z_{\sigma_1\sigma_2} = \begin{pmatrix}
  (1+p^2)^{a_1+a_2}-1 \\
  0\\
\end{pmatrix} = \begin{pmatrix}
(a_1+a_2)p^2\\
0\\
\end{pmatrix}$$ 

\noindent and
\[Z_{\sigma_1} + \sigma_1 Z_{\sigma_2} = 
\begin{pmatrix} a_1 p^{2} \\ 0 \end{pmatrix} + \begin{pmatrix} a_2p^2\\ 0 \end{pmatrix}.
\]

\noindent Therefore $Z$ represents the class of a cocycle in $\Hh^1(G_3, (\Z/p^3\Z)^2)$.
We are going to show that $Z$ satisfies the local conditions, i.e.\ that the equation
$$(\sigma-\Id)\begin{pmatrix}x \\ y \end{pmatrix}=\begin{pmatrix} (1+p^2)^a-1 \\ 0 \end{pmatrix}$$
admits a solution, for all $a,c, \gamma$. This yields to the following system of equations
\[\begin{cases}
    ap^2 x\equiv ap^2 \mod p^3\\
    c p^2 x + \left(\lambda^\gamma-1 +a\lambda^\gamma p+\binom{a}{2}\lambda^\gamma p^2\right) y \equiv 0 \mod p^3 .
\end{cases}\]
If $p\mid a$, then $(x,y)=(0,0)$ is a solution of the system. So assume that $p\nmid a$.
If $\lambda^\gamma-1 +a\lambda^\gamma p+\binom{a}{2}\lambda^\gamma p^2$ is invertible, then a solution is given by
\[
\begin{cases}
    x=1\\
    y = -c(\lambda^\gamma-1 +a\lambda^\gamma p+\binom{a}{2}\lambda^\gamma p^2)^{-1}  p^2.
\end{cases}
\]
Suppose that $p|\lambda^\gamma-1 +a\lambda^\gamma p+\binom{a}{2}\lambda^\gamma p^2$. Observe that $\lambda^\gamma= \alpha^\gamma+p^2 \eta \equiv 1+p^2\eta,$ for some $\eta\in \ZZ/p^3\ZZ$ and
$\lambda^\gamma-1 + a\lambda^\gamma p+\binom{a}{2}\lambda^\gamma p^2\equiv a p + \omega p^2=p(a+\omega p)$, for some $\omega \in \Z/p^3\Z$.
We are assuming that $p\nmid a$, so that $a+\omega p$ is invertible and a solution of the system is given by 
\[
\begin{cases}
    x= 1\\
    y = -c(a+\omega p)^{-1}p.
\end{cases}
\]
\noindent Since the solution of the system depends on $a, c$ and $\gamma$,  it is clear that $Z$ is not a coboundary. Anyway one can verify this directly: for $\sigma=\delta$ the first equation in the system is $p^2 x \equiv p^2 \mod p^3$, whose solutions are $x\equiv 1 \mod p$. On the other hand, for $\tau_L$ we get that the second equation in the system is $p^2 x \equiv 0 \mod p^3$, whose solutions are instead $x\equiv 0 \mod p$.

\medskip

\bigskip Assume that $n=3$ and $j>m$. Recall that
$(\ZZ/p^3\ZZ)^* \simeq \ZZ/{p^2}\ZZ\times \ZZ/(p-1)\ZZ\simeq \ZZ/{p^2(p-1)}\ZZ$.
Then we can choose $\lambda\in (\ZZ/p^3\ZZ)^*$ such that  $\textrm{ord}(\lambda)=p-1$.
We consider the group $G_3$ generated by the following automorphisms:
\[\tau_L=\begin{pmatrix}
1 & 0\\
p^2 & 1\\
\end{pmatrix},
\quad
\delta=\begin{pmatrix}
1 +p& 0\\
0 & 1+p\\
\end{pmatrix},
\quad
\rho=\begin{pmatrix}
1 & 0\\
0 & \lambda\\
\end{pmatrix},\]

\noindent
where $\lambda$ is the element of order $p-1$ as above. Notice that $1+p$ has instead order $p^2$ in $(\ZZ/p^3\ZZ)^*$.
In particular, for all positive integers $a$ and $\gamma$, we have $(1+p)^a\in \langle 1+p\rangle \simeq \ZZ/{p^2}\ZZ$ and $\lambda^\gamma\in \langle \lambda\rangle\simeq \ZZ/(p-1)\ZZ$
and in particular $(1+p)^a$ and $\lambda^\gamma$ are not inverse to each other, unless
$(1+p)^a\equiv \lambda^\gamma\equiv 1 \mod p^3$. In addition, observe that $\delta$ is a scalar matrix, but $\delta-1$ does not represent an automorphism of $\E[p^3]$, because of $\det(\delta-1)=p^2$. Then the hypotheses of \cite[Chap.\ V, Theorem 5.1]{Lang1978EllipticCurves} are not satisfied and we can have $\Hh^1(G_3,\E[p^3])\neq 0$. Indeed we are 
 going to show that  the latter holds. 
 As in the case when $n=3$ and $j=m$, here we are taking
 $i=3$. One can verify that there is the following chain of normal subgroups
$$\la \tau_L\ra \trianglelefteq \la \tau_L, \delta \ra \trianglelefteq \la \tau_L, \delta, \rho\ra = G_3$$
(observe that $\delta$ commutes with every other element in $G_3$) and then every matrix $\sigma\in G_3$ can be written as a product $\delta^a\tau_L^c\rho^\gamma$ for some integers $a,c,\gamma$. Thus\
\[\sigma=\delta^a\tau_L^c\rho^\gamma=\begin{pmatrix}
(1+p)^a & 0\\
cp^2 & \lambda^\gamma (1+p)^a\\
\end{pmatrix}.\]

\noindent Let $Z=\{Z_\sigma\}_{\sigma\in G_3}$, with $Z_\sigma=\begin{pmatrix} 0 \\ c p^2\end{pmatrix}$.
We are going to show that this defines a cocycle of $G_3$ with values in $\Z/p^3\Z$.
Let $\sigma_1 = \delta^{a_1}\tau_L^{c_1}\rho^{\gamma_1}$ and $\sigma_2 =  \delta^{a_2}\tau_L^{c_2}\rho^{\gamma_2}$. Hence
\[  \sigma_1\sigma_2 = \begin{pmatrix}
    (1+p)^{a_1+a_2} & 0\\
    (c_1 + \lambda^{\gamma_1}c_2)p^2 & \lambda^{\gamma_1+\gamma_2}(1+p)^{a_1 + a_2}
\end{pmatrix} \]

\noindent and the image of $Z$ on $\sigma_1\sigma_2$ is 
$$Z_{\sigma_1\sigma_2} = \begin{pmatrix}
  0 \\
  (c_1 + \lambda^{\gamma_1}c_2)p^2\\
\end{pmatrix} .$$ 

\noindent On the other hand,
\[Z_{\sigma_1} + \sigma_1 Z_{\sigma_2} =
\begin{pmatrix}0\\ c_1 p^2 \end{pmatrix} + \begin{pmatrix}
(1+p)^{a_1} & 0\\
c_1p^2 & \lambda^{\gamma_1}(1+p)\\
\end{pmatrix} \begin{pmatrix} 0\\  c_2 p^2 \end{pmatrix}  \equiv \begin{pmatrix}
  0 \\
  (c_1 + \lambda^{\gamma_1}c_2)p^2\\
\end{pmatrix} \mod p^3.
\]

\noindent Therefore $Z$ represents the class of a cocycle in $\Hh^1(G_3, (\Z/p^3\Z)^2)$.
We are going to show that $Z$ satisfies the local conditions, i.e.\ that the equation
$$(\sigma-\Id)\begin{pmatrix}x \\ y \end{pmatrix}=\begin{pmatrix} 0 \\ cp^2 \end{pmatrix}$$
admits a solution, for all $a,c, \gamma$. This yields to the following system of equations
\[\begin{cases}
    (ap+\binom{a}{2}p^{2}) x\equiv 0 \mod p^3\\
    c p^2 x + (\lambda^\gamma-1+\lambda^\gamma ap +\lambda^\gamma\binom{a}{2}p^2) y \equiv c p^2 \mod p^3 .
\end{cases}\]

\noindent If $\lambda^\gamma-1+\lambda^\gamma ap +\lambda^\gamma\binom{a}{2}p^2$ is an invertible element in
$\ZZ/p^3\ZZ$, then a solution is

\[
\begin{cases}
    x=0\\
    y =  c (\lambda^\gamma-1+\lambda^\gamma ap +\lambda^\gamma\binom{a}{2}p^2)^{-1}  p^2.
\end{cases}
\]

\noindent Suppose that $p|\lambda^\gamma-1+\lambda^\gamma ap +\lambda^\gamma\binom{a}{2}p^2$. If
$\lambda^\gamma-1+\lambda^\gamma ap +\lambda^\gamma\binom{a}{2}p^2=p\omega$, with $\omega\in (\ZZ/p^3\ZZ)^*$, then
a solution is
\[
\begin{cases}
    x=0\\
    y =  c \omega^{-1}  p.
\end{cases}
\]

\noindent If
$\lambda^\gamma-1+\lambda^\gamma ap +\lambda^\gamma\binom{a}{2}p^2=\eta p^2$, with $\eta\in (\ZZ/p^3\ZZ)^*$, then
a solution is
\[
\begin{cases}
    x=0\\
    y =  c \eta^{-1}.
\end{cases}
\]

\noindent We are left with the case when $\lambda^\gamma-1+\lambda^\gamma ap +\lambda^\gamma\binom{a}{2}p^2\equiv 0 \mod p^3$, i.e.\ when
$\lambda^\gamma+\lambda^\gamma ap +\lambda^\gamma\binom{a}{2}p^2\equiv 1 \mod p^3$, which is equivalent to 
$\lambda^\gamma(1+p)^a\equiv 1 \mod p^3$. We have already observed that for our choice of $\lambda$, with order coprime
with the order of $1+p$, this may happen if and only if $\lambda^\gamma\equiv (1+p)^a\equiv 1\mod p^3$. In this last case we have 
$\delta^a\equiv \Id \mod p^3$, as well as $\rho\equiv \Id\mod p^3$. Therefore $\sigma=\tau_L^c$ and a solution of the
system is
\[
\begin{cases}
    x=1\\
    y = 0.
\end{cases}
\]
\noindent As in the previous cases, the solution depends on $a,c$ and $\gamma$, thus $Z$ is not a coboundary.
Anyway, to verify this directly, we can take $\sigma=\tau_L$ and $\sigma=\delta$. For $\sigma=\tau_L$, the
second equation in the system is $p^2 x\equiv p^2 \mod p^3$, implying $x\equiv 1\mod p$. For $\sigma=\delta$, the
first equation in the system is $p x\equiv 0 \mod p^3$, implying $x\equiv 0\mod p^2$. 

\medskip
 We are left with the case where $n=2$. The case when $j>m$ does not hold, because of the assumptions $1\leq m<2$ and $1\leq j<2$.  
    Then the only case left is when $j=m=1$, for which we have the mentioned example produced by Ranieri in \cite[Lemma 10]{Ranieri2018CounterexamplesCurves}.

\par\medskip As in the case where $j<m$, also here for $j\geq m$ we have that, by \cite[Lemma 11]{Gillibert2017OnVarieties}, there exists a number field $k$ and an elliptic curve $\mathcal E$ over $k$ such that $\Gal\left( k\left(\mathcal{E}[p^n]\right)/k \right)$ is isomorphic to each of the groups $G_n$ as above. Then in particular $\Hh^1_\loc (G_n,\E[p^n])$ is not trivial. 
\end{proof}

\begin{remark}
As in Remark \ref{rmk:lambda}, for $n\geq 5$, one can obtain other counterexamples by choosing $\lambda = \alpha + p^{h+s}{\theta}$, with $2 \leq s < n/2$, for $n$ even, and $2 \leq s < (n-1)/2$ if $n$, for $n$ odd (in order to have $h+s < n$). For $n=4$, we have that $p^h$ is already as maximum as possible, because
of $h+2=4$ and in this case $\lambda=\alpha$. Similarly, for $n=3$, we have $h+2=3$.
\end{remark}

\begin{remark} \label{rem_criterium}
Observe that for $n=3$ and $j\geq m$, the groups $G_3$ are formed by matrices in lower triangular form that reduced modulo $p^2$ are diagonal. Then $G_2$ is diagonal in this case and
the local-global principle for divisibility fails. Thus the hypothesis of Corollary \ref{criterium} that $G_2$ is lower triangular form cannot be improved further since for groups $G_2$ in diagonal form, counterexamples appear as well.
\end{remark}

\noindent As a consequence of Theorem \ref{thm:counterexamples},  we are going to show that all the counterexamples we produced in Theorem \ref{thm:counterexamples} for the local-global divisibility by $p^n$ in $\E$, give counterexamples to the local-global divisibility by $p^{n+s}$ in $\E$ over a finite extension $L_s$ of $k$, for every integer $s\geq 0$.

\begin{corollary} \label{n+s}
Let $n\geq 2$, $s\geq 0$ be integers. Let $p\geq 5$ be a prime number.
For every elliptic curve $\E$ satisfying the hypotheses of Theorem \ref{thm:counterexamples}, 
there exists a point $P\in \E(L_s)$, with $L_s$ a finite extension of $k$, such that $P$ 
is locally divisible by $p^{n+s}$ in $\E((L_s)_w)$, for all but finitely many 
$w\in M_{L_s}$ (where $(L_s)_w$ is the completion of $L_s$ at $w$), but $P$ is not divisible by $p^{n+s}$ in $\E(L_s)$.
\end{corollary}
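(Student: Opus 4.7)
The plan is to take $P := p^s P_0 \in \E(L_s)$, where $P_0 \in \E(k)$ is the counterexample point to $p^n$-divisibility produced in the proof of Theorem \ref{thm:counterexamples}, and $L_s$ is a finite extension of $k$ to be specified.

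Local divisibility of $P$ by $p^{n+s}$ is immediate: by hypothesis, for all but finitely many $v \in M_k$ there exists $D_v \in \E(k_v)$ with $p^n D_v = P_0$, whence $p^{n+s} D_v = p^s P_0 = P$ in $\E(k_v) \subseteq \E((L_s)_w)$ for every $w$ above $v$. This step holds for any extension $L_s \supseteq k$.

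To rule out global divisibility, fix $W \in \E(\bar k)$ with $p^n W = P_0$, so that $p^{n+s} W = P$. A point $T \in \E(L_s)$ satisfies $p^{n+s} T = P$ iff $T = W + \tau$ for some $\tau \in \E[p^{n+s}]$ with $\sigma \tau - \tau = -(\sigma W - W)$ for every $\sigma \in G_{L_s}$. Writing $Z_\sigma := \sigma W - W$ for the counterexample cocycle of Theorem \ref{thm:counterexamples}, this is the assertion that $[Z]$ becomes a coboundary in $H^1(G_{L_s}, \E[p^{n+s}])$, equivalently $\iota_*[Z]|_{G_{L_s}} = 0$, where $\iota_*$ is induced by the inclusion $\E[p^n] \hookrightarrow \E[p^{n+s}]$. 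Hence $P$ is \emph{not} globally $p^{n+s}$-divisible in $\E(L_s)$ precisely when $\iota_*[Z]|_{G_{L_s}} \neq 0$.

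The long exact cohomology sequence attached to
\[
0 \to \E[p^n] \to \E[p^{n+s}] \xrightarrow{p^n} \E[p^s] \to 0
\]
identifies $\ker(\iota_*)$ with the image of the connecting homomorphism $\partial \colon \E[p^s](L_s) \to H^1(G_{L_s}, \E[p^n])$. One then chooses $L_s$ a finite extension of $k$ linearly disjoint from $k(\E[p^{n+s}])$: this forces $\E[p^s](L_s) = \E[p^s](k)$ and, via the analogue of the isomorphism in Remark~\ref{raising} applied at level $p^{n+s}$, preserves the non-triviality and local-cohomology nature of the class $[Z]|_{G_{L_s}}$. The main obstacle is then the explicit verification that $[Z]|_{G_{L_s}} \notin \mathrm{Im}(\partial)$: using the values of $[Z]$ produced in the proof of Theorem \ref{thm:counterexamples} (namely $(p^m \beta, 0)$ when $j \leq m$ and $(0, p^j \beta)$ when $j > m$), together with the explicit lifts to $\GL_2(\Z/p^{n+s}\Z)$ of the generators $\tau_L, \tau_U, \delta, \rho$ of $G_n$, one computes $\partial(\tau)$ for every $\tau \in \E[p^s](k)$ and checks directly that $[Z]$ is not of this form, mirroring the case analysis of Theorem \ref{thm:counterexamples}.
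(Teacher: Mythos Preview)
Your approach has two genuine gaps. First, the proof of Theorem~\ref{thm:counterexamples} does not produce a point $P_0 \in \E(k)$: it constructs a nontrivial class $[Z] \in \Hh^1_{\loc}(G_n,(\Z/p^n\Z)^2)$ and then invokes \cite[Lemma~11]{Gillibert2017OnVarieties} to realise $G_n$ as $\Gal(k(\E[p^n])/k)$ for some $\E/k$. Even for $s=0$ the passage from a nontrivial $\Hh^1_{\loc}$ to an actual counterexample point requires \cite[Theorem~3]{Dvornicich2007OnInteger} and yields $P_0$ only over a finite extension $L_0$, not over $k$; your later demand that $L_s$ be linearly disjoint from $k(\E[p^{n+s}])$ need not be compatible with $L_s \supseteq L_0$. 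Second, and more seriously, your ``main obstacle'' --- showing $[Z] \notin \mathrm{Im}(\partial)$ --- requires computing the connecting map $\partial$, hence the Galois action on $\E[p^{n+s}]$. But Theorem~\ref{thm:counterexamples} constrains only $\Gal(k(\E[p^n])/k)$; the ``explicit lifts to $\GL_2(\Z/p^{n+s}\Z)$'' you appeal to are simply not determined by the hypotheses, so the promised case analysis cannot be carried out. (A minor point: the cocycle values you quote, $(p^m\beta,0)$ and $(0,p^j\beta)$, are the generic shapes from Lemma~\ref{Q} and the proof of Theorem~\ref{thm:conditions}; the explicit cocycles of Theorem~\ref{thm:counterexamples} are more specific, e.g.\ $Z_\sigma = (ap^{n-1},0)$, and the case split does not match yours.)

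The paper sidesteps both issues by exploiting a feature of those explicit cocycles that you do not use: in every case one has $pZ_\sigma = 0$, i.e.\ $Z_\sigma \in \E[p]$. This places the situation within the scope of \cite[Theorem~2.1]{Paladino2012OnGroups} (with $t = n-1$), which lifts the counterexample from level $p^n$ to level $p^{n+s}$ as a black box, provided one also checks that $\E$ has no $k$-rational point of exact order $p^n$. The paper verifies this remaining hypothesis directly from the generators $\tau_L,\tau_U,\delta,\rho$ of $G_n$, showing that any $G_n$-fixed point of $\E[p^n]$ already lies in $\E[p^{n-1}]$ (or $\E[p^{n-2}]$). No information about the representation beyond level $p^n$ is required.
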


\begin{proof}
For $s=0$, by \cite[Theorem 3]{Dvornicich2007OnInteger}  the nontriviality of $\Hh^1_\loc (G_n,\E[p^n])$, proved in Theorem \ref{thm:counterexamples}, implies that the local-global divisibility by $p^n$ does not hold in
$\E$ over a finite extension $L_0$ of $k$. Observe that in all the counterexamples produced in the proof of  Theorem \ref{thm:counterexamples} when $n\geq 3$, for every $\sigma\in G_n$ we have $p Z_\sigma=(0,0)$, which implies $Z_\sigma\in \E[p]=\E[p^{n-t}]$, with $t=n-1$. Then one of the hypotheses of \cite[Theorem 2.1]{Paladino2012OnGroups} is satisfied with $t=n-1$. In order to apply \cite[Theorem 2.1]{Paladino2012OnGroups}  we need to show in addition that $\E$ has no $k$-rational points of exact order $p^{t+1}=p^{n}$. 
 In the case when $j<m$ and $n\geq 3$, the group $G_n$ giving the counterexample in the proof of Theorem \ref{thm:counterexamples} is generated by the following automorphisms 
$$\tau_L=\begin{pmatrix}
1 & 0\\
p^{n-2} & 1\\
\end{pmatrix},
\quad
\tau_U=\begin{pmatrix}
1 & p^{i}\\
0 & 1\\
\end{pmatrix},
\quad
\delta=\begin{pmatrix}
1 +p^{n-1}& 0\\
0 & 1+p^{h}\\
\end{pmatrix},
\quad
\rho=\begin{pmatrix}
1 & 0\\
0 & \lambda\\
\end{pmatrix}.$$
If $P=(x,y)\in \E[p^n]$ is $k$-rational, then $\sigma(P)=P$, for every $\sigma\in G_n$.
By the generators
as above we in particular get the equations  $y+ p^h y \equiv y \mod p^n$, i.e.\ $p^h y\equiv 0 \mod p^n$, and $p^{n-2}x+y \equiv y \mod p^n$, 
i.e.\  $p^{n-2} x\equiv 0 \mod p^n$.
The equation $p^h y\equiv 0 \mod p^n$ implies $p^{n-h}|y$ and in particular $p^2| y$, by the definition of $h$. The congruence $p^{n-2}x\equiv 0 \mod p^n$ also implies $p^2|x$ and then we have that $p^{n-2}P=(0,0)$.
Thus every point in $\E[p^n]$ fixed by $G_n$ lies indeed in $\E[p^{n-2}]$ and it does not have exact order $p^n$.
Therefore we can apply \cite[Theorem 2.1]{Paladino2012OnGroups} with $t=n-1$ to get the conclusion. For $j\geq m$ and $n\geq 3$ the proof is very similar with the only difference that we can consider the equations  $p^h y\equiv 0 \mod p^n$ and $p^{n-1} x\equiv 0\mod p^n$,  implying
$P\in \E[p^{n-1}]$  (observe that $h=1$, when $p=3$).  Again $P$ has not exact order $p^n$ and we can apply \cite[Theorem 2.1]{Paladino2012OnGroups} with $t=n-1$.
 For $n=2$, we consider the example produced in \cite[Lemma 10]{Ranieri2018CounterexamplesCurves}. We have that the cocycle whose class is a nontrivial element in the first cohomology groups has values in $\E[p]$. By considering the matrices in $G_2$ one can deduce that if $P=(x,y)$ is a $k$-rational point of order $p^2$, then $x\equiv 0\mod p$ and $y\equiv 0\mod p$,
implying that $P$ has order $p$ indeed. Hence one can apply \cite[Theorem 2.1]{Paladino2012OnGroups} with $t=n-1=1$ again.
\end{proof}

\begin{remark}
Observe that even without taking into account Corollary \ref{n+s},
Theorem \ref{thm:counterexamples} proves that for every power $p^n$, with $p\geq 5$ and $n\geq 2$  there exist of a number field $k$ and an elliptic curve $\E$ defined over $k$, such that the local-global divisibility by $p^n$ fails in $\E$ over $k$. In fact,  groups $G_n$ such that $\Hh^1_\loc (G_n,\E[p^n])\neq 0$ are showed for every $n\geq 2$ and every $p\geq 5$ and by \cite[Theorem 3]{Dvornicich2007OnInteger} this implies the failing of the Hasse principle for divisibility by $p^n$ in $\E$ over $k$. Anyway Corollary \ref{n+s} shows the failing of the principle in the \emph{same} elliptic curve $\E$, for all powers $p^s$, with $s\geq n$, whenever $n\geq 2$ and $p\geq 5$. 
\end{remark}

\begin{remark} \label{isogeny}
We relate both cases of Theorem \ref{thm:counterexamples}, when $j<m$ and when $j\geq m$, to the existence or non-existence in $\E$ of $k$-rational cyclic isogenies of degrees a power of $p$.
\begin{enumerate}
\item In the cases when $j<m$, observe that if $n>3$ 
and $i<n$, then the elliptic curve $\E$ admits a $k$-rational cyclic isogeny of degree $p^l$, for all $1\leq l\leq n-2$, but does not admit a cyclic isogeny of degree $p^{n-1}$ and the local-global divisibility by $p^n$ does not hold. 
In fact, the matrices in $G_n$ reduce to matrices in upper triangular form modulo $p^l,$ for all $1\leq l\leq n-2$, while modulo $p^n$, the matrices are neither in upper triangular nor in lower triangular (again with respect to the fixed basis $\{Q_1,Q_2\}$), since $h+1 < i < n$ and $h=n/2$ if $n$ even and $h = (n-1)/2$ if $n$ odd. Observe that this also implies that
$\E$ does not admit a cyclic isogeny of degree $p^{n}$. 
\item Similarly, when $j \geq m$, if $n>3$
and $i<n$, the elliptic curve $\E$ admits a $k$-rational cyclic isogeny of degree $p^l$, for all $1\leq l\leq n-1$, but does not admit a cyclic isogeny of degree $p^n$ and the local-global divisibility by $p^n$ does not hold.
Indeed, we have that the matrices in $G_n$ modulo $p^l$ reduce to matrices in upper triangular form for $1\leq l\leq n-1$, while modulo $p^n$ the matrices are neither in upper triangular nor in lower triangular form (again with respect to the fixed basis $\{Q_1,Q_2\}$), as $h+1 < i < n$ and $h=n/2$ if $n$ even and $h = (n+1)/2$ if $n$ odd.
\end{enumerate}

\noindent 
This is somewhat unexpected. In fact, by \cite[p.~28]{Dvornicich2007OnInteger}, the non-existence of a cyclic $k$-rational isogeny of degree $p$ assures the validity of the local-global principle for divisibility by $p^n$ for every $n\geq 1$. Here instead we have showed that the non-existence of an isogeny of degree $p^n$ does not imply the validity of the local-global divisibility by $p^s$, for all $s\geq n$.
\end{remark}

\bibliographystyle{alpha}
\bibliography{ref}

\bigskip

\begin{minipage}[t]{10cm}
	\begin{flushleft}
		\small{
			\textsc{Jessica Alessandr\`i}
			\\* Max Planck Institute for Mathematics,
			\\* Vivatsgasse 7,
			\\* 53111 Bonn, Germany
			\\*e-mail: alessandri@mpim-bonn.mpg.de
				
		}
	\end{flushleft}
\end{minipage}

\bigskip
	
\begin{minipage}[t]{10cm}
    \begin{flushleft}
	    \small{
		    \textsc{Laura Paladino}
   		    \\*Universit\`a della Calabria,
   		   	\\* Ponte Bucci, Cubo 30B 
	       	\\* Rende (CS), 87036, Italy
	    	\\*e-mail: laura.paladino@unical.it
			
    	}
    \end{flushleft}
\end{minipage}

\end{document}